\theoremstyle{plain}
\newtheorem{theorem}{Theorem}[section]
\newtheorem*{theorem-non}{Theorem}
\newtheorem{proposition}[theorem]{Proposition}
\newtheorem*{proposition-non}{Proposition}
\newtheorem{corollary}[theorem]{Corollary}
\newtheorem*{conjecture-non}{Conjecture}
\newtheorem{lemma}[theorem]{Lemma}
\newtheorem*{lemma-non}{Lemma}
\theoremstyle{definition}
\newtheorem{definition}[theorem]{Definition}
\theoremstyle{remark}
\newtheorem{remark}[theorem]{Remark}
\newtheorem{example}[theorem]{Example}
\newtheorem*{example-non}{Example}
\numberwithin{equation}{section}
\DeclareMathOperator{\diag}{diag}
\DeclareMathOperator{\ch}{ch}
\DeclareMathOperator{\Idem}{Idem}
\newcommand{\id}{\textup{id}}
\newcommand{\pr}{\textup{pr}}
\newcommand{\map}{\textup{map}}
\newcommand{\trans}{\textup{trans}}
\newcommand{\hfd}{\textup{hfd}}
\newcommand{\hf}{\textup{hf}}
\newcommand{\f}{\textup{f}}
\newcommand{\pt}{\textup{pt}}
\newcommand{\inc}{\textup{inc}}
\newcommand{\sing}{\textup{sing}}
\def\N{\mathbb N}
\def\R{\mathbb R}
\def\Z{\mathbb Z}
\title{The $K$-theoretic Farrell-Jones Conjecture for CAT(0)-groups}
\author{Christian Wegner}
\subjclass[2000]{Primary 19D10; Secondary 19A31, 19B28, 20F67}
\keywords{Farrell-Jones conjecture, algebraic $K$-theory of group rings, CAT(0)-groups}
\address{Mathematisches Institut \\ Universit\"at Bonn \\ Endenicher Allee 60 \\ Bonn, D-53115 \\ Germany}
\email{wegner@math.uni-bonn.de}
\begin{document}

\begin{abstract}
We prove the $K$-theoretic Farrell-Jones conjecture with (twisted) coefficients for CAT(0)-groups.
\end{abstract}

\maketitle

\section{Introduction}

The $K$-theoretic Farrell-Jones conjecture with coefficients for a group $G$ says that the $K$-theoretic assembly map
\[
 H^G_m(E_{\mathcal{VC}yc}G;\mathbf{K}_\mathcal{A}) \to H^G_m(\pt;\mathbf{K}_\mathcal{A}) \cong K_m(\int_G \mathcal{A})
\]
is an isomorphism for all $m \in \Z$ and every additive category $\mathcal{A}$ with a strict right $G$-action. Here $E_{\mathcal{VC}yc}G$ denotes the classifying space of the group $G$ with respect to the family of virtually cyclic subgroups. Any additive category $\mathcal{A}$ with a right $G$-action induces a covariant functor $\mathbf{K}_\mathcal{A}$ from the orbit category of $G$ to the category of spectra with (strict) maps of spectra as morphisms (see \cite[Definition 3.1]{BR07}). We denote the associated $G$-homology theory by $H^G_*(-;\mathbf{K}_\mathcal{A})$ (see \cite[sections 4 and 7]{DL98}). The assembly map is the map induced by the projection $E_{\mathcal{VC}yc}G \to \pt$ onto the space consisting of one point.

The $K$- and $L$-theoretic Farrell-Jones conjecture plays an important role in the classification and geometry of manifolds. Moreover, it implies a variety of well-known conjectures, e.g. the Bass-, Borel-, Kaplansky- and Novikov-conjecture. For more information on the Farrell-Jones conjecture we refer to the survey article \cite{LR05}.

In this paper we prove the $K$-theoretic Farrell-Jones conjecture with coefficients for CAT(0)-groups. By a CAT(0)-group we mean a group which admits a cocompact proper action by isometries on a finite dimensional CAT(0)-space.
The proof is based on methods from \cite{BLR08}, \cite{BL09} and \cite{BL10}. In \cite{BLR08} Bartels, L\"uck and Reich show the bijectivity of the $K$-theoretic assembly map for hyperbolic groups. In \cite{BL09} and \cite{BL10} Bartels and L\"uck investigate the $K$-theoretic assembly map for CAT(0)-groups and prove bijectivity in degree $m \leq 0$ and surjectivity in degree $m = 1$.

The general strategy to prove the $K$-theoretic Farrell-Jones conjecture is to study the obstruction category $\mathcal{O}^G(E_\mathcal{F}G,\pt;\mathcal{A})$ whose $K$-theory gives the homotopy fiber of the $K$-theoretic assembly map. Then a transfer map has to be constructed which allows to replace the one-point-space by a suitable metric space which gives room for certain constructions. This metric space has to be carefully chosen since we need contracting properties afterwards. In the case of hyperbolic groups this space is a compactification of the Rips complex of the group $G$. In the case of CAT(0)-groups we use large closed balls in the associated CAT(0)-space. Finally, contractible maps on the metric spaces are used to gain control and to prove the vanishing of the $K$-theory groups of the obstruction category.

The main difficulty in enlarging the result of Bartels and L\"uck comes from the fact that the closed balls in the CAT(0)-space are no $G$-spaces. They only admit a homotopy $G$-action. This is sufficient to define the transfer map for $K_1$ since this map only requires homotopy chain actions. But for higher $K$-theory we have to take account of higher homotopies. A useful tool to tackle this problem are strong homotopy actions which we introduce in section~\ref{sec-cha}. They describe in a simple way a homotopy action together with all higher homotopies. We use them in section~\ref{sec-str} where we define the notion of strong transfer reducibility for groups. This definition specifies the requirements that we have on the metric space which replaces the one-point-space. We show that hyperbolic groups and CAT(0)-groups are strongly transfer reducible over the family of virtually cyclic subgroups (see Example~\ref{ex-hyp} and Theorem~\ref{thm-cat0}).

The following sections are dedicated to the proof of the $K$-theoretic Farrell-Jones conjecture with coefficients for groups which are strongly transfer reducible. More precisely, we prove
\begin{theorem} \label{thm-main}
Let $G$ be a group which is strongly transfer reducible over a family $\mathcal{F}$ of subgroups of $G$.
Let $\mathcal{A}$ be an additive $G$-category, i.e. an additive category with a strict right $G$-action by functors of additive categories.
Then the $K$-theoretic assembly map
\begin{equation} \label{eq-assembly}
 H^G_m(E_\mathcal{F}G;\mathbf{K}_\mathcal{A}) \to H^G_m(\pt;\mathbf{K}_\mathcal{A}) \cong K_m(\int_G \mathcal{A})
\end{equation}
is an isomorphism for all $m \in \Z$.
\end{theorem}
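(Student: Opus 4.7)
The plan is to identify the homotopy fibre of the assembly map~\eqref{eq-assembly} with the $K$-theory of the obstruction category $\mathcal{O}^G(E_\mathcal{F}G,\pt;\mathcal{A})$ mentioned in the introduction, and then to show that $K_m$ of this category vanishes for every $m\in\Z$. This reduction is by now standard in the Farrell--Jones literature (\cite{BLR08},\cite{BL09},\cite{BL10}), so I would take it as the starting point and concentrate on the vanishing statement.

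Given a class $\alpha\in K_m(\mathcal{O}^G(E_\mathcal{F}G,\pt;\mathcal{A}))$, the central step is to build a transfer map that sends $\alpha$ to a class in the $K$-theory of an analogous controlled category over the metric space $X$ furnished by the strong transfer reducibility hypothesis (playing the role of a large closed ball in the CAT(0)-space). Since $X$ carries only a strong homotopy $G$-action and not a genuine one, the transfer has to encode all higher coherences; this is exactly the reason for introducing strong homotopy actions in Section~\ref{sec-cha}. Concretely, I would associate to the strong homotopy action a functorial free resolution of the trivial module and use it to build an honest map of non-connective $K$-theory spectra, not merely a map on $K_1$ as in \cite{BL09}.

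The next step is to exploit the contracting data built into strong transfer reducibility. For every required modulus of control the hypothesis produces, for a suitable open cover of $G\times X$ of bounded dimension whose members are almost $G$-invariant, an equivariant map into the geometric realization of a simplicial complex with isotropy in $\mathcal{F}$, with arbitrarily small $G$-tracks. Feeding these covers into the controlled-algebra machinery (open-cone spectra, squeezing, and the standard Eilenberg-swindle argument) shows that the image of $\alpha$ under the transfer vanishes in the controlled $K$-theory over $X$. The composition ``transfer followed by forget-control'' is, after sorting out signs and resolutions, multiplication by the Euler characteristic of the resolution used to build the transfer, which one arranges to be $1$, so one concludes $\alpha=0$.

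The hard part will be the second step: constructing the transfer as a map of spectra compatible with a strong homotopy action. For $K_1$ (as in \cite{BL09}) one only needs a homotopy chain action and a single layer of homotopies, but for higher $K_m$ the entire simplicial tower of higher homotopies must assemble coherently; otherwise one cannot produce a strictly functorial diagram of spectra on the orbit category of $G$ and hence no well-defined map on $H^G_m(\pt;\mathbf{K}_\mathcal{A})$. The role of strong homotopy actions is precisely to package this infinite tower so that, after passing to the associated resolution, one obtains a strict functor, and then the general assembly-map formalism of \cite{DL98},\cite{BR07} delivers the required spectrum-level transfer and makes the contracting/forget-control argument of the previous paragraph go through for all $m\in\Z$ simultaneously.
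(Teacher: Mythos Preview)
Your high-level strategy matches the paper's: reduce to vanishing of $K_*$ of the obstruction category, construct a transfer using the strong homotopy action, push into a simplicial complex with $\mathcal{F}$-isotropy via the covers, and use that ``transfer followed by projection'' is the identity. But two concrete mechanisms that make the argument go through are absent from your sketch, and without them the proof does not close.

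First, you treat the space $X$ as fixed and speak of ``for every required modulus of control the hypothesis produces\ldots''. The paper does \emph{not} work with a single $X$. It applies Proposition~\ref{prop-str} for each $n\in\N$ (with $S^n$ and $k=n$) to get a \emph{sequence} of spaces $X_n$, actions $\Psi_n$, and maps $f_n\colon G\times X_n\to\Sigma_n$, and then works in the category $\mathcal{O}^G(E_\mathcal{F}G,(G\times X_n,d_n)_{n\in\N};\mathcal{A})^{>\oplus}$ obtained as the quotient of the uniform product by the direct sum. The point is that control only becomes arbitrarily good as $n\to\infty$; a single $X$ does not carry enough contracting data to kill an arbitrary $K$-class. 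The vanishing one actually cites is $K_m\big(\mathcal{O}^G(E_\mathcal{F}G,(\Sigma_n,n\cdot d^1)_{n\in\N};\mathcal{A})^{>\oplus}\big)=0$ from \cite[Theorem~7.2]{BLR08}, and the argument is completed by the commuting square in Section~\ref{sec-out} together with the injectivity of the diagonal $K_m(\mathcal{O}^G(E_\mathcal{F}G,\pt;\mathcal{A}))\hookrightarrow K_m(\mathcal{O}^G(E_\mathcal{F}G,(\pt)_{n\in\N};\mathcal{A})^{>\oplus})$.

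Second, your plan to build ``an honest map of non-connective $K$-theory spectra'' from a ``functorial free resolution'' is not how the coherence problem is solved here. The transfer is realized as a \emph{functor of Waldhausen categories}
\[
\trans\colon \mathcal{O}^G(E_\mathcal{F}G,\pt;\mathcal{A})\ \longrightarrow\ \widetilde{\ch}_\hfd\,\mathcal{O}^G(E_\mathcal{F}G,(G\times X_n,d_n)_{n\in\N};\mathcal{A})^{>\oplus},
\]
where $\widetilde{(-)}$ is the ``telescope'' Waldhausen category of \cite[\S8.2]{BR05} whose objects are direct systems along cofibrant weak equivalences. The strong homotopy action enters through the filtered $G$-space $M_n=\bigcup_\alpha M_n^\alpha$ of Remark~\ref{rem_cha}; one tensors with singular chains $D_*(n,\alpha)$ of $G\times M_n^\alpha$ and lets $\alpha$ run. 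A morphism $\phi$ with $G$-support in $S^{\alpha_0}$ only lands in $D_*(n,\alpha+1)$, so functoriality is only available after passing to the direct system---this is precisely why $\widetilde{\ch}_\hfd$ is needed, and it replaces your unspecified ``infinite tower of higher homotopies assembling coherently''. Also note that by Proposition~\ref{prop-assembly} one only has to prove vanishing for $m\geq 1$, so non-connective spectra play no role; and the reduction to finitely generated $G$ (via colimits and Remark~\ref{rem-subgr}) is used at the outset to fix the generating set $S$.
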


In section~\ref{sec-obscat} we give a short review of controlled algebra which is a crucial tool in the proof. In particular, we define the obstruction category. An outline of the proof of Theorem~\ref{thm-main} is given in section~\ref{sec-out}. The last two sections deal with the transfer map and finish the proof of Theorem~\ref{thm-main}.

Following the proof of \cite[Lemma 2.3]{BL10} we see that Theorem~\ref{thm-main} and Theorem~\ref{thm-cat0} imply
\begin{corollary}
Let $G_1$, $G_2$ be groups which satisfy the $K$-theoretic Farrell-Jones conjecture with coefficients.
Then the groups $G_1 \times G_2$ and $G_1 * G_2$ satisfy the $K$-theoretic Farrell-Jones conjecture with coefficients, too.
\end{corollary}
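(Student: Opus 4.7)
The plan is to invoke the transitivity principle for the Farrell-Jones conjecture with coefficients, together with Theorems~\ref{thm-main} and~\ref{thm-cat0} and the standard fact that FJC with coefficients passes to subgroups. Recall the transitivity principle: if $\mathcal{F} \subseteq \mathcal{F}'$ are two families of subgroups of $G$ such that FJC with coefficients holds for $G$ relative to $\mathcal{F}'$ and, for every $H \in \mathcal{F}'$, holds for $H$ relative to $\mathcal{F} \cap H$, then it holds for $G$ relative to $\mathcal{F}$.

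For the product $G := G_1 \times G_2$, I would take $\mathcal{F}'$ to be the family of subgroups of the form $H_1 \times H_2$ with each $H_i \leq G_i$ virtually cyclic. First, I would show that $G$ satisfies FJC with coefficients relative to $\mathcal{F}'$, using that $E_{\mathcal{VC}yc}G_1 \times E_{\mathcal{VC}yc}G_2$ is a model for $E_{\mathcal{F}'}G$ and deducing the assembly isomorphism from the hypotheses on $G_1$ and $G_2$. Next, each $H \in \mathcal{F}'$ is virtually $\mathbb{Z}^n$ with $n \leq 2$ and therefore acts properly cocompactly by isometries on $\mathbb{R}^n$, so it is a CAT(0)-group; Theorems~\ref{thm-cat0} and~\ref{thm-main} then give FJC with coefficients for $H$ relative to $\mathcal{VC}yc$. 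Applying transitivity finishes this case.

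For the free product $G := G_1 * G_2$, I would take $\mathcal{F}'$ to be the family of subgroups that are either virtually cyclic or conjugate into $G_1$ or $G_2$. The Bass--Serre tree $T$, on which $G$ acts cocompactly with trivial edge stabilizers and vertex stabilizers conjugate to $G_1$ or $G_2$, serves as a one-dimensional model for $E_{\mathcal{F}'}G$. A pushout / Mayer--Vietoris argument then reduces FJC with coefficients for $G$ relative to $\mathcal{F}'$ to FJC for $G_1$ and $G_2$, which is the assumption. Each $H \in \mathcal{F}'$ is either virtually cyclic (covered by Theorems~\ref{thm-cat0} and~\ref{thm-main}, since virtually cyclic groups are CAT(0)) or a subgroup of $G_1$ or $G_2$ (covered by hypothesis plus inheritance of FJC with coefficients to subgroups). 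Transitivity concludes.

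The hard part in each case is the first reduction, i.e.\ establishing the assembly isomorphism relative to the auxiliary family $\mathcal{F}'$: one needs an explicit model for $E_{\mathcal{F}'}G$ together with a compatible decomposition of the associated $G$-homology. This is exactly the content of \cite[Lemma 2.3]{BL10}; the argument given there goes through unchanged in the present setting once Theorems~\ref{thm-main} and~\ref{thm-cat0} supply the full FJC with coefficients for CAT(0)-groups.
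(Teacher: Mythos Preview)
Your proposal is correct and follows essentially the same approach as the paper, which simply refers to \cite[Lemma~2.3]{BL10} and observes that the argument there goes through once Theorems~\ref{thm-main} and~\ref{thm-cat0} provide the full $K$-theoretic Farrell--Jones conjecture for CAT(0)-groups. You have spelled out the content of that reference accurately: transitivity, the product of classifying spaces (respectively the Bass--Serre tree) to handle the auxiliary family, and the CAT(0) input for the virtually abelian (respectively virtually cyclic) isotropy groups.

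One small wording point: in the product case your family $\mathcal{F}'$ should be the family of \emph{all subgroups of} $V_1 \times V_2$ with each $V_i$ virtually cyclic, rather than just the products themselves, so that it is closed under taking subgroups; this is harmless since every such subgroup is still virtually $\mathbb{Z}^{\leq 2}$ and hence CAT(0).
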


This paper was supported by the SFB 878 -- Groups, Geometry \& Actions.

\section{Strong homotopy actions} \label{sec-cha}

Let $G$ be a CAT(0)-group, i.e. a group which admits a cocompact proper action by isometries on a finite dimensional CAT(0)-space $Y$. We would like to replace the CAT(0)-space $Y$ by a compact space, namely a large ball in $Y$. The price we have to pay for this replacement is that we only retain a $G$-action on the ball up to homotopy. To control these homotopies we introduce the notion of a strong homotopy action.

\begin{definition} \label{def-cha}
A \emph{strong homotopy action} of a group $G$ on a topological space $X$ is a continuous map
\[
 \Psi \colon \coprod_{j=0}^\infty \big( (G \times [0,1])^j \times G \times X \big) \to X
\]
with the following properties:
\begin{enumerate}
 \item $\Psi(\ldots,g_l,0,g_{l-1},\ldots) = \Psi(\ldots,g_l,\Psi(g_{l-1},\ldots))$
 \item $\Psi(\ldots,g_l,1,g_{l-1},\ldots) = \Psi(\ldots,g_l \cdot g_{l-1},\ldots)$
 \item $\Psi(e,t_j,g_{j-1},\ldots) = \Psi(g_{j-1},\ldots)$
 \item $\Psi(\ldots,t_l,e,t_{l-1},\ldots) = \Psi(\ldots,t_l \cdot t_{l-1},\ldots)$
 \item $\Psi(\ldots,t_1,e,x) = \Psi(\ldots,x)$
 \item $\Psi(e,x) = x$
\end{enumerate}
\end{definition}
A strong homotopy action restricts to a homotopy $S$-action in the sense of \cite[Definition 1.4]{BL09} by setting $\phi_g(x) := \Psi(g,x)$ and $H_{g,h}(x,t) := \Psi(g,t,h,x)$. But the strong homotopy action also perceives the higher homotopies. The simple description of a strong homotopy action is very useful for our purpose.

\begin{remark} \label{rem_cha}
It is not true in general that a topological space $X$ which is homotopy equivalent to a $G$-space admits a strong homotopy action by conjugation with the homotopy equivalence.\\
Strong homotopy actions appear in the following situation: Let $Y$ be a $G$-space and let $H \colon Y \times [0,1] \to Y$ be a deformation retraction onto a subspace $X \subseteq Y$ (i.e. $H_0(Y) = X$, $H_0|_X = \id_X$ and $H_1 = \id_Y$) such that $H_t \circ H_{t'} = H_{t \cdot t'}$ for all $t,t' \in [0,1]$. For example, we can consider the CAT(0)-space together with a deformation retraction on a ball by projecting along geodesics. (We will make this more precise in the proof of Theorem~\ref{thm-cat0}.) In this situation we define
\[
 \Omega \colon \coprod_{j=0}^\infty \big( (G \times [0,1])^j \times G \times X \big) \to Y
\]
inductively by $\Omega(g_0,x) := g_0 \cdot x$ and $\Omega(g_j,t_j,g_{j-1},\dots) := g_j \cdot H_{t_j}(\Omega(g_{j-1},\ldots))$ for $j \geq 1$. Then $\Psi := H_0 \circ \Omega$ is a strong homotopy action.\\
On the other hand, a strong homotopy action $\Psi$ induces a subspace $M$ of the space of continuous mappings $\coprod_{j=0}^\infty (G \times [0,1])^j \times G \to X$. It is defined by
\[
 M := \big\{ \Psi(?,t_\alpha,g_{\alpha-1},\ldots,g_0,x) \, \big| \, \alpha \in \N_0, t_i \in [0,1], g_i \in G, x \in X \big\}
\]
and has a $G$-action given by $c_g(f) := f(?,1,g)$. Moreover, we obtain a deformation retraction $H \colon M \times [0,1] \to M, f \mapsto f(?,t,e)$ onto the subspace $H_0(M) = \{\Psi(?,x) \mid x \in X\} \cong X$ which satisfies $H_t \circ H_{t'} = H_{t \cdot t'}$ for all $t,t' \in [0,1]$.\\
If we start with a strong homotopy action and construct the associated deformation retraction then the strong homotopy action associated to this deformation retraction coincides with the original strong homotopy action. In general, the other composition of the two constructions is not the identity. Nevertheless, both constructions are inverse to each other if the interior $\mathring{X}$ of $X = H_0(Y)$ satisfies $G \cdot \mathring{X} = Y$. This condition is satisfied in the case of our CAT(0)-group as long as the ball, on which we project, is large enough. Anyhow, we will not make use of this fact.
\end{remark}

In analogy to \cite[Definition 1.4 and Definition 3.4]{BL09} we make the following definition.
\begin{definition}
Let $\Psi$ be a strong homotopy $G$-action on a metric space $(X,d_X)$. Let $S \subseteq G$ be a finite symmetric subset which contains the trivial element $e \in G$. Let $k \in \N$ be a natural number.
\begin{enumerate}
 \item For $g \in G$ we define $F_g(\Psi,S,k) \subset \map(X,X)$ by
       \[
        F_g(\Psi,S,k) := \big\{ \Psi(g_k,t_k,\ldots,g_0,?) \colon X \to X \, \big| \, g_i \in S, t_i \in [0,1], g_k \cdot \ldots \cdot g_0 = g \big\}.
       \]
 \item For $(g,x) \in G \times X$ we define $S^1_{\Psi,S,k}(g,x) \subset G \times X$ as the subset consisting of all $(h,y) \in G \times X$ with the following property: There are $a,b \in S$, $f \in F_a(\Psi,S,k)$ and $\tilde{f} \in F_b(\Psi,S,k)$ such that $f(x)=\tilde{f}(y)$ and $h = g a^{-1} b$. For $n \in \N^{\geq 2}$ we set
       \[
        S^n_{\Psi,S,k}(g,x) := \big\{ S^1_{\Psi,S,k}(h,y) \, \big| \, (h,y) \in S^{n-1}_{\Psi,S,k}(g,x) \big\}.
       \]
 \item For $\Lambda \in \R^{>0}$ we define the quasi-metric $d_{\Psi,S,k,\Lambda}$ on $G \times X$ as the largest quasi-metric on $G \times X$ satisfying
     \begin{itemize}
      \item $d_{\Psi,S,k,\Lambda}\big((g,x),(g,y)\big) \leq \Lambda \cdot d_X(x,y)$ for all $g \in G$, $x,y \in X$ and
      \item $d_{\Psi,S,k,\Lambda}\big((g,x),(h,y)\big) \leq 1$ for all $(h,y) \in S^1_{\Psi,S,k}(g,x)$.
     \end{itemize}
\end{enumerate}
\end{definition}
We remind the reader that the difference between a metric and a quasi-metric is that in the later case the distance $\infty$ is allowed. Notice that the quasi-metric $d_{\Psi,S,k,\Lambda}$ is $G$-invariant with respect to the $G$-action $g(h,x) := (gh,x)$ on $G \times X$. See \cite[Definition 3.4]{BL09} for a construction of the quasi-metric $d_{\Psi,S,k,\Lambda}$. The following lemma is taken from \cite[Lemma 3.5]{BL09}.
\begin{lemma} \label{lem-qm}
\begin{enumerate}
\item The subset $S$ generates $G$ if and only if $d_{\Psi,S,k,\Lambda}$ is a metric.
\item Let $(g,x),(h,y) \in G \times X$ and $n \in \N$. Then $(h,y) \in S^n_{\Psi,S,k}(g,x)$ if and only if $d_{\Psi,S,k,\Lambda}((g,x),(h,y)) \leq n$ for all $\Lambda > 0$. \label{lem-qm2}
\item The topology on $G \times X$ induced by $d_{\Psi,S,k,\Lambda}$ coincides with the product topology.
\end{enumerate}
\end{lemma}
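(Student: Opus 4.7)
The plan is to unravel the definition of the largest quasi-metric satisfying the two constraints into an explicit infimum over chains. Writing $b((g,x),(g,y)) := \Lambda \cdot d_X(x,y)$ for \emph{fibre steps} and $b(p,q) := 1$ whenever $q \in S^1_{\Psi,S,k}(p)$ for \emph{$S^1$-steps}, a standard argument gives
\[
 d_{\Psi,S,k,\Lambda}(p,q) \;=\; \inf \sum_{i=0}^{N-1} b(p_i,p_{i+1}),
\]
the infimum running over all finite chains $p = p_0,\ldots,p_N = q$. One may always assume fibre steps and $S^1$-steps alternate, by combining consecutive fibre steps and discarding degenerate ones. With this description, part~(1) is immediate: if $S$ generates $G$, write $g^{-1}h = s_n\cdots s_1$ with $s_i\in S$ and iterate the move $(g',z) \mapsto (g's_i^{-1},\Psi(s_i,z))\in S^1_{\Psi,S,k}(g',z)$, witnessed by $a=s_i$, $b=e$, $f=\Psi(s_i,?)$, $\tilde{f}=\id$; appending one fibre step yields a chain of finite cost. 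Conversely, every $S^1$-step multiplies the group coordinate by $a^{-1}b \in \langle S\rangle$ while fibre steps do not change it, so points in distinct $\langle S\rangle$-cosets are at distance $\infty$.

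For part~(2), the forward direction is obvious: from $(h,y)\in S^n_{\Psi,S,k}(g,x)$ we read off a pure $S^1$-chain of length $n$ and cost $\leq n$, independent of $\Lambda$. For the converse, assume $d_{\Psi,S,k,\Lambda}((g,x),(h,y)) \leq n$ for every $\Lambda>0$ and fix $\epsilon>0$. For each $\Lambda$ choose a chain of cost $<n+\epsilon$; it has at most $n$ $S^1$-steps, and the total $d_X$-length of its fibre steps is $<(n+\epsilon)/\Lambda$. Padding with trivial $S^1$-steps (using $a=b=e$, $f=\tilde{f}=\id$) one may normalise each chain to have exactly $n$ $S^1$-steps, each witnessed by data $(a_i^\Lambda,b_i^\Lambda)\in S^2$ together with $\Psi$-parameters living in the compact set $(S\times[0,1])^k \times S$. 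Along a subsequence $\Lambda\to\infty$ the finite data stabilise and the continuous parameters converge; since the $d_X$-lengths of the fibre steps tend to $0$, continuity of $\Psi$ propagates each $S^1$-coincidence $f_i^\Lambda(\cdot)=\tilde{f}_i^\Lambda(\cdot)$ to the limit along the now-stabilised sequence of group elements, producing a pure $S^1$-chain of length $n$ from $(g,x)$ to $(h,y)$, i.e.\ $(h,y)\in S^n_{\Psi,S,k}(g,x)$. This compactness/limit step is the main technical point of the lemma; the finiteness of $S$ is essential to reduce to compactly many continuous parameters.

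For part~(3), an $S^1$-step contributes cost $1$, so for $r<1$ no $S^1$-step appears in any chain of cost $<r$, giving
\[
 B_r(g,x) \;=\; \{g\}\times B^{d_X}_{r/\Lambda}(x),
\]
which is open in the product topology (with $G$ discrete). Conversely every basic product-open neighbourhood of $(g,x)$ contains some $\{g\}\times B^{d_X}_{\epsilon}(x)$, and the same formula shows this contains $B_{\min(1/2,\,\Lambda\epsilon)}(g,x)$. Hence the two topologies coincide.
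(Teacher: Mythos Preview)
The paper does not supply its own proof of this lemma; it simply cites \cite[Lemma~3.5]{BL09}. Your explicit chain description of $d_{\Psi,S,k,\Lambda}$ is the standard one, and your arguments for parts~(1) and~(3) are correct as written.

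For part~(2) the limit argument is the right idea, but one step is glossed over. To pass the $S^1$-coincidences to the limit you need not only the $[0,1]$-parameters and the $S$-labels to stabilise, but also the \emph{intermediate $X$-coordinates} of the chain to converge along your subsequence. The maps $\tilde f_i$ are in general neither injective nor proper, so convergence of $\tilde f_i^\Lambda(z_i^\Lambda)=f_i^\Lambda(z_{i-1}^\Lambda)$ does not by itself force $z_i^\Lambda$ to converge; you must extract a further subsequence, and for that you need $X$ to be compact. This hypothesis is not recorded in the lemma as stated here, but it is satisfied in every use the paper makes of the lemma (the spaces $X$ in Definition~\ref{def-str} and Proposition~\ref{prop-str} are compact). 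You should make the compactness assumption explicit in your write-up, or else explain how to avoid it.
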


\section{Strong transfer reducibility} \label{sec-str}

In this section we introduce the notion of strong transfer reducibility for groups which is an analogue of the notion of transfer reducibility defined in \cite[Definition 1.8]{BL09}.

\begin{definition} \label{def-str}
Let $\mathcal{F}$ be a family of subgroups of $G$.
The group $G$ is called \emph{strongly transfer reducible} over $\mathcal{F}$ if there exists a natural number $N \in \N$ with the following property:
For every finite symmetric subset $S \subseteq G$ containing the trivial element $e \in G$ and every natural numbers $k, n \in \N$ there are
\begin{itemize}
 \item a compact contractible controlled $N$-dominated metric space $X$,
 \item a strong homotopy $G$-action $\Psi$ on $X$ and
 \item a cover $\mathcal{U}$ of $G \times X$ by open sets
\end{itemize}
such that
\begin{enumerate}
 \item $\mathcal{U}$ is an open $\mathcal{F}$-cover,
 \item $\dim(\mathcal{U}) \leq N$,
 \item for every $(g,x) \in G \times X$ there exists $U \in \mathcal{U}$ with $S^n_{\Psi,S,k}(g,x) \subseteq U$.
\end{enumerate}
\end{definition}

\begin{example} \label{ex-hyp}
Hyperbolic groups are strongly transfer reducible over the family of virtually cyclic subgroups. As metric space $X$ we choose the compactification of the Rips complex. The strong homotopy action $\Psi$ on $X$ is given by the action of the hyperbolic group on $X$: $\Psi(g_j,t_j,\ldots,g_0,x) := g_j \cdot \ldots \cdot g_0 \cdot x$. For more details we refer to the proof of \cite[Proposition 2.1]{BL09}.
\end{example}

\begin{remark} \label{rem-subgr}
Let $G$ be a group which is strongly transfer reducible over a family of subgroups $\mathcal{F}$. Let $H < G$ be a subgroup. We set $\mathcal{F}_H := \{ F \cap H \mid F \in \mathcal{F} \}$. By restricting the strong homotopy $G$-action $\Psi$ on $X$ we obtain a strong homotopy $H$-action. Moreover, we can restrict the cover $\mathcal{U}$ of $G \times X$ to a cover of $H \times X$. We conclude that $H$ is strongly transfer reducible over $\mathcal{F}_H$.
\end{remark}

Following \cite{BL10} we obtain
\begin{theorem} \label{thm-cat0}
Every CAT(0)-group is strongly transfer reducible over the family of virtually cyclic subgroups.
\end{theorem}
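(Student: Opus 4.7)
The plan is to adapt the proof of \cite[Theorem~1.1]{BL10}, where CAT(0)-groups are shown to be transfer reducible over $\mathcal{VC}yc$, and to upgrade the homotopy action produced there to a fully coherent strong homotopy action in the sense of Definition~\ref{def-cha}. Fix a cocompact proper isometric action of $G$ on a finite dimensional CAT(0)-space $Y$ (so that $Y$ is proper) and a basepoint $y_0 \in Y$. Given $S$, $k$, $n$ as in Definition~\ref{def-str}, take $X := \bar{B}_\rho(y_0)$ for a radius $\rho = \rho(S,k,n)$ to be fixed below. Then $X$ is compact and, as a geodesically convex subset of a CAT(0)-space, contractible; controlled $N$-domination with $N$ depending only on $\dim Y$ follows as in \cite{BL10} by approximation with simplicial complexes of dimension at most $\dim Y$.

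For the strong homotopy action I would use Remark~\ref{rem_cha}. Define the geodesic retraction $H \colon Y \times [0,1] \to Y$ by $H_t(y) := y$ if $y \in X$ and otherwise by taking $H_t(y)$ to be the point on the geodesic from $y_0$ to $y$ at distance $\max\bigl(\rho,\, t \cdot d_Y(y_0,y)\bigr)$ from $y_0$. Then $H_0(Y) = X$, $H_0|_X = \id_X$, $H_1 = \id_Y$, and a case analysis on whether $t' \cdot d_Y(y_0,y) \leq \rho$ gives the semigroup identity $H_t \circ H_{t'} = H_{t \cdot t'}$. Applying the construction of Remark~\ref{rem_cha} then yields a strong homotopy $G$-action $\Psi := H_0 \circ \Omega$ on $X$.

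The remaining ingredient is an open $\mathcal{VC}yc$-cover $\mathcal{U}$ of $G \times X$ of dimension at most $N$ satisfying condition~(3) of Definition~\ref{def-str}. Here I would invoke the covering theorem that is the technical core of \cite{BL10}: for every wideness parameter $\alpha$ there is, provided $\rho$ is large enough, an open $\mathcal{VC}yc$-cover of $G \times \bar{B}_\rho(y_0)$ of dimension bounded in terms of $\dim Y$ which is $\alpha$-wide with respect to the foliated metric associated to the geodesic flow on $Y$. Any map in $F_g(\Psi, S, k)$ is a composition of at most $k+1$ steps of the form $g_i \cdot H_{t_i}(\cdot)$ with $g_i \in S$, all realized by $G$-translations and geodesic retractions in $Y$; using the semigroup identity $H_t \circ H_{t'} = H_{t \cdot t'}$ one checks that the diameter of $S^n_{\Psi, S, k}(g,x)$ in this foliated metric is bounded by a function of $n$, $k$ and $|S|$ that is independent of $\rho$. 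Choosing $\alpha$ larger than this bound and $\rho$ correspondingly large yields the desired $\mathcal{U}$.

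The main obstacle is the last estimate: showing that the sets $S^n_{\Psi, S, k}(g,x)$, which encode compositions of length $k+1$ together with all their higher homotopies, remain controlled by the foliated metric of \cite{BL10}, which was tailored to pair homotopies $H_{g,h}$. This reduces to a uniform geometric bound on iterates of the geodesic retraction under precomposition with elements of $S$, which one expects to follow from the nonpositive curvature of $Y$ and the semigroup identity on $H$. Once this is in hand, the openness of $\mathcal{U}$, its dimension bound, and the $\mathcal{VC}yc$-isotropy of its elements are inherited directly from \cite{BL10}.
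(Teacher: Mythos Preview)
Your approach is essentially the paper's: the same ball $X=\overline{B}_R(y_0)$, the same strong homotopy action built from the geodesic retraction via Remark~\ref{rem_cha} (your reparametrization $\max(\rho, t\cdot d)$ differs from the paper's $R+t(d-R)$ but both satisfy $H_t\circ H_{t'}=H_{tt'}$), and the same idea of pulling back a long-thin $\mathcal{VC}yc$-cover coming from the geodesic flow space $FS(Y)$ of \cite{BL10}.

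The one point where your sketch is slightly misconceived is the ``last estimate''. You phrase it as a diameter bound: the sets $S^n_{\Psi,S,k}(g,x)$ should have diameter in a ``foliated metric'' bounded by some $\alpha=\alpha(S,k,n)$ independent of $\rho$, after which one just takes the cover wide enough. That is not what happens, and the phrasing hides the actual mechanism. In the paper the cover is pulled back from $FS(Y)$ along the map $(g,x)\mapsto \Phi_T(c_{gy_0,gx})$, and two parameters have to be chosen: the ball radius $R$ \emph{and} a flow time $T$. The statement (Lemma~\ref{lem-cat0}) is that for fixed $S,k,n$ there is an $\alpha$ such that for every $\epsilon>0$ one can pick $R,T$ so that whenever $(h,y)\in S^n_{\Psi^R,S,k}(g,x)$ there is a time shift $\tau\in[-\alpha,\alpha]$ with
\[
 d_{FS(Y)}\bigl(\Phi_T\circ\iota(g,x),\,\Phi_{T+\tau}\circ\iota(h,y)\bigr)\leq\epsilon.
\]
So only the flow-direction displacement is bounded \emph{a priori} by $\alpha$; the transverse distance is not bounded independently of $R$ but is \emph{contracted} to below $\epsilon$ by flowing for the long time $T$, using \cite[Proposition~3.5]{BL10}. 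The proof is an induction: first along the $k+1$ steps of a single $f\in F_a(\Psi^R,S,k)$ (unwinding $\Psi^R$ as iterated retractions and translations, each step contributing one application of \cite[Proposition~3.5]{BL10}), and then along the $n$ links in the chain defining $S^n$. The semigroup identity for $H$ is used in this unwinding, and the exponential distortion of $d_{FS(Y)}$ under $\Phi_\tau$ (\cite[Lemma~1.3]{BL10}) forces the factors $e^{|\sigma_i|}$ that appear in the estimates, which is why $\alpha$ must be fixed first and the $\epsilon$-bounds chosen with margin $e^{-\alpha}$. Your heuristic ``uniform geometric bound on iterates of the geodesic retraction'' does not by itself produce this; you need the contraction under the flow.
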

\begin{proof}
Let $Y$ be a finite dimensional CAT(0)-space on which $G$ acts cocompactly and properly. Fix a base point $x_0 \in Y$. As metric space $X$ we will choose a (large) closed ball $\overline{B}_R(x_0) \subseteq Y$ around the base point. By \cite[Lemma 6.2]{BL10} $\overline{B}_R(x_0)$ is a compact contractible controlled $(2 \cdot \dim(Y) + 1)$-dominated metric space.

For $R > 0$ we define a strong homotopy action
\[
 \Psi^R \colon \coprod_{j=0}^\infty \big( (G \times [0,1])^j \times G \times \overline{B}_R(x_0) \big) \to \overline{B}_R(x_0)
\]
as follows.
A continuous map $c \colon \R \to Y$ is called a generalized geodesic if there are $c_-, c_+ \in \R \cup \{\pm \infty\}$ with $\infty \neq c_- \leq c_+ \neq -\infty$ such that $c$ restricts to an isometry on the interval $(c_-,c_+)$ and is locally constant on the complement of this interval (see \cite[Definition 1.1]{BL10}).
For $x,y \in Y$ we denote by $c_{x,y}$ the generalized geodesic satisfying $(c_{x,y})_- = 0$, $c_{x,y}(-\infty) = x$ and $c_{x,y}(\infty) = y$. For $R > 0$ we consider the deformation retraction $H^R \colon Y \times [0,1] \to Y$ on the ball $\overline{B}_R(x_0)$ by projecting along geodesics, i.e.
\[
 H^R(x,t) := c_{x,x_0}\big((d_Y(x,x_0)-R) \cdot (1-t)\big).
\]
Notice that $H^R_t \circ H^R_{t'} = H^R_{t \cdot t'}$. We define $\Psi^R$ as the associated strong homotopy action (see Remark~\ref{rem_cha}).

For the construction of the cover $\mathcal{U}$ we have to introduce the flow space $FS(Y)$ which is the $G$-space consisting of all generalized geodesics $c \colon \R \to Y$. The metric on $FS(Y)$ is given by
\[
 d_{FS(Y)}(c,d) := \int_{-\infty}^\infty \frac{d_Y(c(t),d(t))}{2 \cdot e^{|t|}} \, dt.
\]
We define a $G$-equivariant flow $\Phi \colon FS(Y) \times \R \to FS(Y)$ by $\Phi_\tau(c)(t) := c(t + \tau)$ (see \cite[Definition 1.2]{BL10}).

By \cite[Theorem 5.7]{BL10} and \cite[subsection 6.3]{BL10} there is $\widehat{N} \in \N$ such that for every $\alpha > 0$ there exists an open $\mathcal{VC}yc$-cover $\mathcal{V}$ of $FS(Y)$ of dimension at most $\widehat{N}$ and $\epsilon > 0$ such that
\begin{itemize}
 \item $\mathcal{V} / G$ is finite and
 \item for every $z \in FS(Y)$ there is $V \in \mathcal{V}$ with $B_\epsilon(\Phi_{[-\alpha,\alpha]}(z)) \subseteq V$.
\end{itemize}
We set $N:= \max\{\widehat{N}, 2 \cdot \dim(Y)+1\}$.

The construction of the cover $\mathcal{U}$ is based on the contracting property described in Lemma~\ref{lem-cat0} below. We fix $\alpha > 0$ as in the assertion of Lemma~\ref{lem-cat0}. Let $\mathcal{V}$ be an open $\mathcal{VC}yc$-cover of $FS(Y)$ of dimension at most $\widehat{N}$ and let $\epsilon$ be a positive real number such that the two properties mentioned above are satisfied.
For this $\epsilon > 0$ we obtain $R,T > 0$ from Lemma~\ref{lem-cat0}. Then the cover
\[
 \mathcal{U} := \big\{ (\Phi_T \circ \iota)^{-1}(V) \cap G \times \overline{B}_R(x_0) \, \big| \, V \in \mathcal{V} \big\}.
\]
with $\iota \colon G \times \overline{B}_R(x_0) \to FS(Y), (g,y) \mapsto c_{gx_0,gy}$ has the desired properties.
\end{proof}

In the proof of Theorem~\ref{thm-cat0} we used the following lemma which is a modification of \cite[Proposition 3.8]{BL10} resp. \cite[Lemma 5.12]{BL10}.
\begin{lemma} \label{lem-cat0}
Let $S \subseteq G$ be a finite symmetric subset containing the trivial element $e \in G$. Let $k, n \in \N$. Then there exists $\alpha > 0$ with the following property:
For all $\epsilon > 0$ there are $R,T > 0$ such that for every $(g,x) \in G \times \overline{B}_R(x_0)$ and $(h,y) \in S^n_{\Psi^R,S,k}(g,x)$ there is $\tau \in [-\alpha,\alpha]$ with
\[
 d_{FS(Y)}\big(\Phi_T \circ \iota(g,x),\Phi_{T+\tau} \circ \iota(h,y)\big) \leq \epsilon.
\]
(We use the same notation as in the proof of Theorem~\ref{thm-cat0}.)
\end{lemma}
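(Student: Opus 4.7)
The plan is to proceed by induction on $n$, after reducing to $g = e$ by $G$-equivariance. Since $\iota(g', x) = g' \cdot \iota(e, x)$, the natural $G$-action on $FS(Y)$ commutes with $\Phi$, and the relation $S^n_{\Psi^R, S, k}$ is $G$-invariant in its first coordinate, it suffices to compare $\Phi_T \iota(e, x)$ with $\Phi_{T + \tau} \iota(h, y)$ for $(h, y) \in S^n_{\Psi^R, S, k}(e, x)$. Setting $D_S := \max_{s \in S} d_Y(s x_0, x_0)$, I would choose $\alpha$ to grow linearly in $n$, so that each application of $S^1$ contributes an additive time shift of bounded magnitude (order $D_S$).

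The heart of the argument is the base case $n = 1$. Here $h = a^{-1} b$ with $a, b \in S$, and there exist $f \in F_a(\Psi^R, S, k)$, $\tilde f \in F_b(\Psi^R, S, k)$ with $f(x) = \tilde f(y)$. Unwinding $\Psi^R = H^R_0 \circ \Omega$ as in Remark~\ref{rem_cha}, the computation of $f(x) = \Psi^R(a_k, t_k, \ldots, a_0, x)$ with $a_k \cdots a_0 = a$ interleaves multiplications by the $a_i \in S$ with geodesic retractions $H^R_{t_i}$ toward $x_0$. Each $H^R_t$ is a CAT(0) geodesic projection (distance-non-increasing from $x_0$) and each $a_i$-translation increases the distance to $x_0$ by at most $D_S$, so the intermediate points of $\Omega$ remain in $\overline{B}_{R + (k+1) D_S}(x_0)$. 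The key geometric consequence of $f(x) = \tilde f(y)$ is that $a \cdot x$ and $b \cdot y$ project to the same point under the final radial projection $H^R_0$ and hence are (essentially) colinear with $x_0$ along a common geodesic ray, while the path traced by $\Omega$ stays within a tube of bounded radius around the geodesic from $x_0$ to $a \cdot x$.

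The second ingredient is the CAT(0) contraction of the geodesic flow. Passing to the $G$-translates $c_{a x_0, a x} = a \cdot \iota(e, x)$ and $c_{b x_0, b y} = a \cdot \iota(h, y)$, the two generalized geodesics have starting points within distance $2 D_S$, terminal endpoints on almost the same ray through $x_0$ at distances differing by at most $2 D_S$, and parameterization lengths $d_Y(x_0, x)$ and $d_Y(x_0, y)$ that also differ by at most $2 D_S$ in modulus. Choosing $\tau := d_Y(a x_0, a x) - d_Y(b x_0, b y)$ aligns the endpoint transitions of $\Phi_T c_{a x_0, a x}$ and $\Phi_{T + \tau} c_{b x_0, b y}$; the resulting pointwise $Y$-distance between the two flowed geodesics is small on the geodesic part (by CAT(0) convexity of distance along geodesics) and contained on the ``colinear tails'' on which the weighting $e^{-|t|}/2$ in $d_{FS(Y)}$ decays exponentially. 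The bound $\epsilon$ is then achieved by first choosing $R$ large (controlling the geometric approximations coming from $\Psi^R$) and then $T$ large (pushing the unavoidable tail contributions into the exponentially damped region).

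The inductive step from $n - 1$ to $n$ factors $(h, y) \in S^n(e, x)$ as $(h, y) \in S^1(g', x')$ with $(g', x') \in S^{n-1}(e, x)$, applies the induction hypothesis with $\epsilon/2$ to the pair $((e, x), (g', x'))$, applies the base case with $\epsilon/2$ (translated via $G$-equivariance and at the flow time $T + \tau'$) to the pair $((g', x'), (h, y))$, and composes the two shifts; the total $\tau$ stays inside $[-\alpha, \alpha]$ because each step contributes at most a bounded shift. The main obstacle I expect is the uniform CAT(0) estimate in the base case: verifying that the strong homotopy action $\Psi^R$, with all its higher homotopy parameters $t_1, \ldots, t_k$, really does force the colinear structure on $a \cdot x$, $b \cdot y$ and $x_0$ up to an error controlled by $R$ and $T$. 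This extends the corresponding two-variable estimate underlying \cite[Lemma~5.12]{BL10}, and its verification amounts to iterating CAT(0) convexity through the $k$ intermediate geodesic retractions $H^R_{t_i}$, uniformly in the $t_i$ and in the decomposition of $a = a_k \cdots a_0$.
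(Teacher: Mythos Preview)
Your overall architecture—reduce to $g=e$ by equivariance, prove a one–step estimate, and then chain $n$ (or rather $2n$) such estimates with additive time shifts—matches the paper's. The gap is in the geometry of the one–step estimate.

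You assert that $f(x)=\tilde f(y)$ forces $a\cdot x$ and $b\cdot y$ to ``project to the same point under $H^R_0$'' and hence to be essentially colinear with $x_0$. For $k\ge 1$ this is false: what coincide are the projections of $\Omega(g_k,\ldots,g_0,x)$ and of the corresponding $\tilde\Omega$-point, and these differ from $a\cdot x$, $b\cdot y$ by as much as $kD_S$—an error that does \emph{not} shrink with $R$. More seriously, even granting approximate colinearity, the claim that the pointwise distance between $\Phi_T c_{ax_0,ax}$ and $\Phi_{T+\tau}c_{bx_0,by}$ is ``small on the geodesic part by CAT(0) convexity'' is wrong. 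Convexity only bounds $d_Y(c(t),c'(t))$ by the linear interpolation of the two endpoint distances, and here both endpoint distances are of order $(k+1)D_S$, independent of $R$ and $T$. Your argument therefore yields $d_{FS}\le C(k,S)$, a fixed constant, not $\epsilon$; think of two parallel Euclidean segments of length $R$ at distance $D_S$.

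What actually drives the $\epsilon$-smallness is contraction toward a \emph{common} target: if two geodesics issue from nearby basepoints and both extend to the same point $z$, then $d_Y(c(t),c'(t))\to 0$ as $t\to z$, and this is precisely what \cite[Proposition~3.5]{BL10} encodes. The paper manufactures such common targets by telescoping through the intermediate stages of $\Psi^R$. With $x_i:=(g_{i-1}\cdots g_0)^{-1}x_0$, $y_i:=(g_{i-1}\cdots g_0)^{-1}\Psi^R(g_{i-1},\ldots,g_0,z)$ and $z_i:=(g_{i-1}\cdots g_0)^{-1}\Omega(g_{i-1},\ldots,g_0,z)$, one checks that \emph{both} $y_i$ and $y_{i+1}$ are radial projections of the \emph{same} point $z_{i+1}$ from the nearby centres $x_i$, $x_{i+1}$. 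Hence \cite[Proposition~3.5]{BL10} applies to each consecutive pair $(c_{x_i,y_i},c_{x_{i+1},y_{i+1}})$; summing $k+1$ such estimates (and absorbing the $e^{|\sigma|}$ factors from \cite[Lemma~1.3]{BL10}) yields $d_{FS}(\Phi_T\iota(e,z),\Phi_{T+\tau}\iota(a^{-1},f(z)))\le\epsilon'$ with $|\tau|\le (k+1)\alpha'$. Applying this once for $f$ and once for $\tilde f$ handles a single $S^1$-step, and then your chaining over $n$ goes through. Your closing remark about ``iterating CAT(0) convexity through the $k$ retractions'' points in the right direction, but the crucial feature you are missing is that the iteration must be arranged so that at every step the two geodesics under comparison share a target point.
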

\begin{proof}
We set $\alpha := 2n \cdot (k+1) \cdot \alpha'$ with $\alpha' := \max\{ d_Y(g x_0, h x_0) \, | \, g,h \in S^{k+1} \}$. Let $\epsilon > 0$.
By \cite[Proposition 3.5]{BL10} there exist $R,T > 0$ such that for $x, x', x'' \in Y$ with $d_Y(x',x'') \leq \alpha'$ and $x \in \overline{B}_{R+\alpha}(x')$ there is $\tau \in [-\alpha',\alpha']$ such that
\begin{equation} \label{eq1}
 d_{FS(Y)}\big(\Phi_T(c_{x',c_{x',x}(d_Y(x',x)-R)}),\Phi_{T+\tau}(c_{x'',c_{x'',x}(d_Y(x'',x)-R)})\big) \leq \frac{\epsilon}{2n \cdot (k+1) \cdot e^{2\alpha}}.
\end{equation}
We fix such positive real numbers $R,T$.

At next we show that for every $z \in \overline{B}_R(x_0)$, $a \in G$ and $f \in F_a(\Psi^R,S,k)$ there is $\tau \in [-(k+1) \cdot \alpha',(k+1) \cdot \alpha']$ satisfying
\begin{equation} \label{eq2}
 d_{FS(Y)}\big(\Phi_T \circ \iota(e,z),\Phi_{T+\tau} \circ \iota(a^{-1},f(z))\big) \leq \frac{\epsilon}{2n \cdot e^\alpha}.
\end{equation}
There are $g_0, \ldots, g_k \in S$ and $t_1, \ldots, t_k \in [0,1]$ such that $f(z) = \Psi^R(g_k,t_k,\ldots,g_0,z)$.
We set $y_0 := z$ and $z_0 := z$. Moreover, we define $x_i := (g_{i-1} \ldots g_0)^{-1} \cdot x_0$, $y_i := (g_{i-1} \ldots g_0)^{-1} \cdot \Psi^R(g_{i-1},\ldots,g_0,z)$ and $z_i := (g_{i-1} \ldots g_0)^{-1} \cdot \Omega^R(g_{i-1},\ldots,g_0,z)$ for $i=1,\ldots,k+1$.
Notice that $y_i$ and $z_{i+1}$ lie on the geodesic between $x_i$ and $z_i$. We conclude $y_i = c_{x_i,z_{i+1}}(d_Y(x_i,z_{i+1})-R)$ and $y_{i+1} = c_{x_{i+1},z_{i+1}}(d_Y(x_{i+1},z_{i+1})-R)$.
We apply the inequality~(\ref{eq1}) and obtain $\tau_i \in [-\alpha',\alpha']$ ($i=0,\ldots,k$) with
\[
 d_{FS(Y)}\big(\Phi_T(c_{x_i,y_i}),\Phi_{T+\tau_i}(c_{x_{i+1},y_{i+1}})\big) \leq \frac{\epsilon}{2n \cdot (k+1) \cdot e^{2\alpha}}.
\]
We set $s_i := \sum_{l=0}^{i-1} \tau_l \leq (k+1) \cdot \alpha'$ for $i=0,\ldots,k+1$.
Using \cite[Lemma 1.3]{BL10} we calculate
\begin{align*}
& d_{FS(Y)}\big(\Phi_T \circ \iota(e,z),\Phi_{T+s_{k+1}} \circ \iota(a^{-1},f(z))\big) \\
& = d_{FS(Y)}\big(\Phi_{T+s_0}(c_{x_0,y_0}),\Phi_{T+s_{k+1}}(c_{x_{k+1},y_{k+1}})\big) \\
& \leq \sum_{i=0}^k d_{FS(Y)}\big(\Phi_{T+s_i}(c_{x_i,y_i}),\Phi_{T+s_{i+1}}(c_{x_{i+1},y_{i+1}})\big) \\
& \leq \sum_{i=0}^k e^{|s_i|} \cdot d_{FS(Y)}\big(\Phi_{T}(c_{x_i,y_i}),\Phi_{T+\tau_i}(c_{x_{i+1},y_{i+1}})\big) \\
& \leq \sum_{i=0}^k e^\alpha \cdot \frac{\epsilon}{2n \cdot (k+1) \cdot e^{2\alpha}} = \frac{\epsilon}{2n \cdot e^\alpha}.
\end{align*}

Since $(h,y) \in S^n_{\Psi^R,S,k}(g,x)$, there are $c_i \in \overline{B}_R(x_0)$, $a_i,b_i \in S$, $f_i \in F_{a_i}(\Psi^R,S,k)$ and $\tilde{f}_i \in F_{b_i}(\Psi^R,S,k)$ ($i=1,\ldots,n)$ such that $c_0=x$, $c_n=y$, $f_i(c_{i-1})=\tilde{f}_i(c_i)$ and $h = g a_1^{-1} b_1 \ldots a_n^{-1} b_n$.
By the inequality~(\ref{eq2}) there are $\tau_i, \tilde{\tau}_i \in [-(k+1) \alpha',(k+1) \alpha']$ such that
\begin{eqnarray*}
 d_{FS(Y)}\big(\Phi_T \circ \iota(e,c_{i-1}),\Phi_{T+\tau_i} \circ \iota(a_i^{-1},f_i(c_{i-1}))\big) & \leq & \frac{\epsilon}{2n \cdot e^\alpha}, \\
 d_{FS(Y)}\big(\Phi_T \circ \iota(e,c_i),\Phi_{T+\tilde{\tau}_i} \circ \iota(b_i^{-1},\tilde{f}_i(c_i))\big) & \leq & \frac{\epsilon}{2n  \cdot e^\alpha}
\end{eqnarray*}
for $i=1,\ldots,n$.
We set $g_i := g a_1^{-1} b_1 \ldots a_i^{-1} b_i$, $\sigma_i := \sum_{l=1}^i \tau_i - \tilde{\tau}_i \in [-\alpha,\alpha]$.
Since $g_{i-1}a_i^{-1} = g_i b_i^{-1}$, $f_i(c_{i-1})=\tilde{f}_i(c_i)$ and $T+\sigma_{i-1}+\tau_i=T+\sigma_i+\tilde{\tau}_i$ we conclude
\begin{align*}
& d_{FS(Y)}\big(\Phi_{T+\sigma_{i-1}} \circ \iota(g_{i-1},c_{i-1}),\Phi_{T+\sigma_i} \circ \iota(g_i,c_i)\big) \\
& \leq d_{FS(Y)}\big(\Phi_{T+\sigma_{i-1}} \circ \iota(g_{i-1},c_{i-1}),\Phi_{T+\sigma_{i-1}+\tau_i} \circ \iota(g_{i-1}a_i^{-1},f_i(c_{i-1}))\big) + \\
& \qquad d_{FS(Y)}\big(\Phi_{T+\sigma_i+\tilde{\tau}_i} \circ \iota(g_i b_i^{-1},\tilde{f}_i(c_i)),\Phi_{T+\sigma_i} \circ \iota(g_i,c_i)\big) \\
& = d_{FS(Y)}\big(\Phi_{T+\sigma_{i-1}} \circ \iota(e,c_{i-1}),\Phi_{T+\sigma_{i-1}+\tau_i} \circ \iota(a_i^{-1},f_i(c_{i-1}))\big) + \\
& \qquad d_{FS(Y)}\big(\Phi_{T+\sigma_i+\tilde{\tau}_i} \circ \iota(b_i^{-1},\tilde{f}_i(c_i)),\Phi_{T+\sigma_i} \circ \iota(e,c_i)\big) \\
& \leq e^{|\sigma_{i-1}|} \cdot d_{FS(Y)}\big(\Phi_T \circ \iota(e,c_{i-1}),\Phi_{T+\tau_i} \circ \iota(a_i^{-1},f_i(c_{i-1}))\big) + \\
& \qquad e^{|\sigma_i|} \cdot d_{FS(Y)}\big(\Phi_{T+\tilde{\tau}_i} \circ \iota(b_i^{-1},\tilde{f}_i(c_i)),\Phi_{T} \circ \iota(e,c_i)\big) \\
& \leq e^{\alpha} \cdot \frac{\epsilon}{2n \cdot e^\alpha} + e^{\alpha} \cdot \frac{\epsilon}{2n  \cdot e^\alpha} = \frac{\epsilon}{n}.
\end{align*}
For $\tau := \sigma_n$ we obtain
\begin{align*}
& d_{FS(Y)}\big(\Phi_T \circ \iota(g,x),\Phi_{T+\tau} \circ \iota(h,y)\big) \\
& \leq \sum_{i=1}^n d_{FS(Y)}\big(\Phi_{T+\sigma_{i-1}} \circ \iota(g_{i-1},c_{i-1}),\Phi_{T+\sigma_i} \circ \iota(g_i,c_i)\big) \\
& \leq \sum_{i=1}^n \frac{\epsilon}{n} = \epsilon.
\end{align*}
This finishes the proof of Lemma~\ref{lem-cat0}.
\end{proof}

The proof of Theorem~\ref{thm-main} is based on the following proposition which is a modification of \cite[Proposition 3.9]{BL09}.
\begin{proposition} \label{prop-str}
Let $G$ be a group which is strongly transfer reducible over a family $\mathcal{F}$ of subgroups. Let $N$ be the number appearing in the definition of "strongly transfer reducible". Let $S \subseteq G$ be a finite symmetric subset containing the trivial element $e \in G$. Then for every $k \in \N$ there exist
\begin{itemize}
\item a compact contractible controlled $N$-dominated metric space $X$,
\item a strong homotopy $G$-action $\Psi$ on $X$,
\item a positive real number $\Lambda$,
\item a simplicial complex $\Sigma$ of dimension $\leq N$ with a simplicial cell preserving $G$-action and
\item a $G$-equivariant map $f \colon G \times X \to \Sigma$
\end{itemize}
such that
\begin{itemize}
\item the isotropy groups of $\Sigma$ belong to $\mathcal{F}$ and
\item $k \cdot d^1\big(f(g,x),f(h,y)\big) \leq d_{\Psi,S,k,\Lambda}\big((g,x),(h,y)\big)$ for all $(g,x),(h,y) \in G \times X$.
\end{itemize}
Here $d^1$ denotes the $l^1$-metric on simplicial complexes, see \cite[subsection 4.2]{BLR08}.
\end{proposition}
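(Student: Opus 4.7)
My plan is to apply strong transfer reducibility with a judiciously chosen parameter $n$, take $\Sigma$ to be (a barycentric subdivision of) the nerve of the resulting open cover, and define $f$ by a partition of unity built from the quasi-metric $d_{\Psi,S,k,\Lambda}$. This is the strong-homotopy-action variant of the nerve construction in the proof of \cite[Proposition~3.9]{BL09}.

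\textbf{Setup.} Set $n := 4k(N+1)$ and apply Definition~\ref{def-str} to the given $S$ and $k$ together with this $n$. This yields a compact contractible controlled $N$-dominated metric space $X$, a strong homotopy $G$-action $\Psi$ on $X$, and an open $\mathcal{F}$-cover $\mathcal{U}$ of $G \times X$ of dimension $\leq N$ such that for every $(g,x) \in G \times X$ there is some $U_{(g,x)} \in \mathcal{U}$ with $S^n_{\Psi,S,k}(g,x) \subseteq U_{(g,x)}$. Take $\Sigma$ to be the geometric realization of the nerve of $\mathcal{U}$, passing to a barycentric subdivision if needed to arrange that the induced $G$-action is cell-preserving; then $\Sigma$ is a simplicial complex of dimension $\leq N$ with isotropy groups in $\mathcal{F}$.

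\textbf{The map.} Fix any $\Lambda > 0$ and write $d := d_{\Psi,S,k,\Lambda}$; this is $G$-invariant and, by Lemma~\ref{lem-qm}(3), induces the product topology on $G \times X$. For each $U \in \mathcal{U}$ set
\[
 b_U(g,x) := \min\bigl\{n,\, d\bigl((g,x),\,(G \times X) \setminus U\bigr)\bigr\}.
\]
Each $b_U$ is continuous, supported in $U$, and $1$-Lipschitz with respect to $d$. By Lemma~\ref{lem-qm}(2), $S^n_{\Psi,S,k}(g,x) \subseteq U_{(g,x)}$ is equivalent to saying every $(h,y)$ with $d((g,x),(h,y)) \leq n$ lies in $U_{(g,x)}$, so $b_{U_{(g,x)}}(g,x) = n$, and hence $B(g,x) := \sum_{V \in \mathcal{U}} b_V(g,x) \geq n$. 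Put $a_U := b_U/B$ and define
\[
 f(g,x) \;:=\; \sum_{U \in \mathcal{U}} a_U(g,x) \cdot U \;\in\; \Sigma.
\]
Since $\mathcal{U}$ and $d$ are $G$-invariant, the relation $a_{gU}(g \cdot (h,y)) = a_U(h,y)$ holds and $f$ is $G$-equivariant.

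\textbf{Contraction.} From the algebraic identity
\[
 a_U(g,x) - a_U(h,y) = \frac{b_U(g,x) - b_U(h,y)}{B(g,x)} + \frac{b_U(h,y)\bigl(B(h,y) - B(g,x)\bigr)}{B(g,x)\,B(h,y)},
\]
sum over $U$ and use $|B(g,x) - B(h,y)| \leq \sum_U |b_U(g,x) - b_U(h,y)|$, $B(g,x) \geq n$, the $1$-Lipschitz property of the $b_U$, and the fact that $\dim \mathcal{U} \leq N$ forces at most $2(N+1)$ indices $U$ to contribute nonzero terms to $\sum_U |b_U(g,x) - b_U(h,y)|$. A routine calculation, parallel to the one in \cite[Proof of Proposition~3.9]{BL09}, gives
\[
 d^1\bigl(f(g,x),f(h,y)\bigr) \;\leq\; \frac{4(N+1)}{n}\,d\bigl((g,x),(h,y)\bigr) \;=\; \frac{1}{k}\,d_{\Psi,S,k,\Lambda}\bigl((g,x),(h,y)\bigr),
\]
which is the claimed inequality. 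The main work is the Lipschitz bookkeeping; the cell-preservingness of the $G$-action on $\Sigma$ (handled by subdivision, which preserves dimension and the $\mathcal{F}$-isotropy) and continuity of $f$ in the product topology (by Lemma~\ref{lem-qm}(3)) are routine. The substitution of strong homotopy actions for homotopy actions enters only through the definition of $d_{\Psi,S,k,\Lambda}$ and the hypothesis from Definition~\ref{def-str}; the nerve-and-partition-of-unity argument is otherwise unchanged.
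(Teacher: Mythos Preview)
Your argument has the right shape, but there is a genuine gap in how you choose $\Lambda$. You write ``Fix any $\Lambda > 0$'' and then claim, via Lemma~\ref{lem-qm}(\ref{lem-qm2}), that $S^n_{\Psi,S,k}(g,x) \subseteq U_{(g,x)}$ is equivalent to saying that every $(h,y)$ with $d_{\Psi,S,k,\Lambda}((g,x),(h,y)) \leq n$ lies in $U_{(g,x)}$. That is not what the lemma says: it characterizes $S^n_{\Psi,S,k}(g,x)$ as the set of $(h,y)$ with $d_{\Psi,S,k,\Lambda'}((g,x),(h,y)) \leq n$ for \emph{all} $\Lambda' > 0$. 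For a fixed $\Lambda$, the $n$-ball in $d_{\Psi,S,k,\Lambda}$ contains $S^n_{\Psi,S,k}(g,x)$ but can be strictly larger; indeed, since $d_{\Psi,S,k,\Lambda}((g,x),(g,y)) \leq \Lambda \cdot d_X(x,y)$, for $\Lambda$ small the $n$-ball contains all of $\{g\} \times X$, which need not lie in any single member of $\mathcal{U}$. So the conclusion $b_{U_{(g,x)}}(g,x) = n$, and hence $B(g,x) \geq n$, is unjustified.

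The paper's proof handles exactly this point: one first shows (following \cite[Proposition~3.7]{BL09}) that for each $x \in X$ there is some $\Lambda_x > 0$ and $U_x \in \mathcal{U}$ with the $n$-ball in $d_{\Psi,S,k,\Lambda_x}$ around $(e,x)$ contained in $U_x$, and then uses compactness of $X$ to pass to a uniform $\Lambda$. Once $\Lambda$ is chosen in this way, your partition-of-unity and Lipschitz bookkeeping (with the truncation $\min\{n,\cdot\}$ or without, and with $n = 4k(N+1)$ in place of the paper's $4Nk$) goes through essentially as in the paper. The barycentric-subdivision remark is harmless. In short: the missing ingredient is precisely the existence of a suitable $\Lambda$, and that is where compactness of $X$ is actually used.
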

\begin{proof}
We choose a strong homotopy $G$-action $\Psi$ on a metric space $X$ and a cover $\mathcal{U}$ of $G \times X$ which satisfy the properties stated in Definition~\ref{def-str} for $S$, $k$ and $n := 4Nk$.
Using Lemma~\ref{lem-qm}~(\ref{lem-qm2}) we conclude as in the proof of \cite[Proposition 3.7]{BL09} that for every $x \in X$ there exists $\Lambda_x > 0$ and $U_x \in \mathcal{U}$ such that the $n$-ball around $(e,x)$ with respect to the quasi-metric $d_{\Psi,S,k,\Lambda_x}$ lies in $U_x$. Moreover, since $X$ is compact, there exists $\Lambda > 0$ such that every $n$-ball with respect to the quasi-metric $d_{\Psi,S,k,\Lambda}$ lies in some $U \in \mathcal{U}$ (see the proof of \cite[Proposition 3.7]{BL09}).
Let $\Sigma := |\mathcal{U}|$ be the realization of the nerve of $\mathcal{U}$ and let $f$ be the map induced by $\mathcal{U}$, i.e.
\[
f \colon G \times X \to |\mathcal{U}|, \, (g,x) \mapsto \sum_{U \in \mathcal{U}} \frac{a_U(g,x)}{s(g,x)} \, U
\]
with $a_U(g,x) := \inf\{d_{\Psi,S,k,\Lambda}((g,x),(h,y)) \, | \, (h,y) \notin U\}$ and $s(g,x) := \sum_U a_U(g,x)$.
Notice that $|a_U(g,x) - a_U(h,y)| \leq d_{\Psi,S,k,\Lambda}((g,x),(h,y))$ and hence
\[
\sum_{U \in \mathcal{U}} |a_U(g,x)-a_U(h,y)| \leq 2N \cdot d_{\Psi,S,k,\Lambda}\big((g,x),(h,y)\big)
\]
for all $(g,x), (h,y) \in G \times X$.
We calculate
\begin{eqnarray*}
d^1(f(g,x),f(h,y)) & = & \sum_{U \in \mathcal{U}} \Big| \frac{a_U(g,x)}{s(g,x)} - \frac{a_U(h,y)}{s(h,y)} \Big| \\
& = & \sum_{U \in \mathcal{U}} \Big| \frac{a_U(g,x)-a_U(h,y)}{s(g,x)} + \frac{a_U(h,y) \cdot \big(s(h,y)-s(g,x)\big)}{s(g,x) \cdot s(h,y)} \Big| \\
& \leq & \frac{\sum_{U \in \mathcal{U}} |a_U(g,x)-a_U(h,y)|}{s(g,x)} + \frac{\big|s(h,y)-s(g,x)\big|}{s(g,x)} \\
& \leq & 2 \cdot \frac{\sum_{U \in \mathcal{U}} |a_U(g,x)-a_U(h,y)|}{s(g,x)} \\
& \leq & 4N \cdot \frac{d_{\Psi,S,k,\Lambda}\big((g,x),(h,y)\big)}{s(g,x)} \\
& \leq & \frac{d_{\Psi,S,k,\Lambda}\big((g,x),(h,y)\big)}{k}.
\end{eqnarray*}
For the last inequality we used the fact that the $n$-ball around $(g,x)$ lies in some $U \in \mathcal{U}$ and hence $s(g,x) \geq n$.
\end{proof}

\section{The obstruction category} \label{sec-obscat}

In this section we recall the definition of the obstruction category. In the following $\mathcal A$ denotes a small additive category (with strictly associative direct sum) which is provided with a strict right $G$-action.

\begin{definition}
Let $X$ be a $G$-space and let $(Y,d_Y)$ be a metric space with an isometric $G$-action. We consider the $G$-space $G \times X \times Y \times [1,\infty)$ with the $G$-action given by $h(g,x,y,t) := (hg,hx,hy,t)$. We define the \emph{obstruction category} $\mathcal{O}^G(X,(Y,d_Y);\mathcal{A})$ as follows.\\
An object in $\mathcal{O}^G(X,(Y,d_Y);\mathcal{A})$ is a collection $A = (A_{g,x,y,t})_{(g,x,y,t) \in G \times X \times Y \times [1,\infty)}$ of objects in $\mathcal{A}$ with the following properties:
\begin{itemize}
 \item $A$ is locally finite, i.e. for every $z_0 \in G \times X \times Y \times [1,\infty)$ there exists an open neighborhood $U$ such that the set $\{z \in G \times X \times Y \times [1,\infty) \, | \, A_z \neq 0\} \cap U$ is finite.
 \item There is a compact subset $K \subseteq G \times X \times Y$ such that $A_{g,x,y,t} = 0$ whenever $(g,x,y) \notin G \cdot K$.
 \item We have $A_z \cdot g = A_{g^{-1}z}$ for all $z \in G \times X \times Y \times [1,\infty)$ and $g \in G$.
\end{itemize}
A morphism $\phi \colon B \to A$ is a collection of morphisms $\phi_{z,z'} \colon B_{z'} \to A_z$ in $\mathcal{A}$ ($z,z' \in G \times X \times Y \times [1,\infty)$) with the following properties:
\begin{itemize}
 \item The sets $\{ z \in G \times X \times Y \times [1,\infty) \, | \, \phi_{z,z'} \neq 0\}$ and $\{ z \in G \times X \times Y \times [1,\infty)  \, | \, \phi_{z',z} \neq 0\}$ are finite for all $z' \in G \times X \times Y \times [1,\infty)$.
 \item There are $R,T > 0$ and a finite subset $F \subseteq G$ such that $\phi_{(g,x,y,t),(g',x',y',t')} = 0$ whenever $g^{-1}g' \notin F$ or $d_Y(y,y') > R$ or $|t-t'| > T$.
 \item The set
     \[
      \big\{ \big((x,t),(x',t')\big) \in (X \times [1,\infty))^2 \, \big| \, \exists \, g,g' \in G, y,y' \in Y: \phi_{(g,x,y,t),(g',x',y',t')} \neq 0 \big\}
     \]
     lies in the equivariant continuous control condition $\mathcal{E}^X_{Gcc}$ defined in \cite[section 3.2]{BLR08}.
 \item We have $\phi_{z,z'} \cdot g = \phi_{g^{-1}z,g^{-1}z'}$ for all $z,z' \in G \times X \times Y \times [1,\infty)$ and $g \in G$.
\end{itemize}
Composition is given by matrix multiplication, i.e.
\[
 (\psi \circ \phi)_{z,z''} := \sum_{z' \in G \times X \times Y \times [1,\infty)} \psi_{z,z'} \circ \psi_{z',z''}.
\]
The obstruction category $\mathcal{O}^G(X,(Y,d_Y);\mathcal{A})$ inherits the structure of an additive category from $\mathcal{A}$.
\end{definition}
We use the same notation as in \cite[subsection 4.4]{BL09} which slightly differs from the notation used in \cite{BLR08} (see \cite[Remark 4.10]{BL09}).

The construction is functorial in $Y$: Let $f \colon Y \to Y'$ be a $G$-equivariant map with the property that for every $r > 0$ there exists $R > 0$ such that $d_{Y'}(f(y_1),f(y_2)) < R$ whenever $d_Y(y_1,y_2) < r$. Then the map $f$ induces a functor $f_* \colon \mathcal{O}^G(X,Y;\mathcal{A}) \to \mathcal{O}^G(X,Y';\mathcal{A})$ with $f_*(A)_{g,x,y',t} := \oplus_{y \in f^{-1}(\{y'\})} \, A_{g,x,y,t}$.

We are mostly interested in $\mathcal{O}^G(E_\mathcal{F}G,\pt;\mathcal{A})$ because of the following proposition which is proven in \cite[Proposition 3.8]{BLR08}.
\begin{proposition} \label{prop-assembly}
Let $G$ be a group and $m_0 \in \Z$ such that
\[
 K_m\big(\mathcal{O}^G(E_\mathcal{F}G,\pt;\mathcal{A})\big) = 0
\]
for all $m \geq m_0$ and all additive $G$-categories $\mathcal{A}$.
Then the assembly map~(\ref{eq-assembly}) is an isomorphism for all $m \in \Z$ and all additive $G$-categories $\mathcal{A}$.
\end{proposition}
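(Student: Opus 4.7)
My plan is to prove Proposition~\ref{prop-assembly} by following the strategy of \cite[Proposition 3.8]{BLR08} in two main steps: first identify the $K$-theory of the obstruction category with the homotopy fiber of the assembly map via a fibration sequence, and second amplify the hypothesis --- vanishing in degrees $\geq m_0$ for \emph{every} coefficient category --- into vanishing in all degrees by a Bass--Heller--Swan shift argument applied to the coefficients.

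For the first step, I would identify $H^G_*(E_\mathcal{F}G;\mathbf{K}_\mathcal{A})$ with the $K$-theory of an equivariant continuously controlled category built over $E_\mathcal{F}G \times [1,\infty)$, using the comparison of Davis--L\"uck assembly with controlled algebra developed in \cite{BR07} and \cite{BLR08}, and identify $H^G_*(\pt;\mathbf{K}_\mathcal{A}) \cong K_*(\int_G \mathcal{A})$ with the $K$-theory of the corresponding ``germs at infinity'' category in which the $E_\mathcal{F}G$-coordinate has been collapsed. The obstruction category $\mathcal{O}^G(E_\mathcal{F}G,\pt;\mathcal{A})$ then appears as the kernel of the induced Karoubi filtration, yielding a fibration sequence of spectra whose long exact sequence
\begin{equation*}
 \cdots \to K_m(\mathcal{O}^G(E_\mathcal{F}G,\pt;\mathcal{A})) \to H^G_m(E_\mathcal{F}G;\mathbf{K}_\mathcal{A}) \to H^G_m(\pt;\mathbf{K}_\mathcal{A}) \to K_{m-1}(\mathcal{O}^G(E_\mathcal{F}G,\pt;\mathcal{A})) \to \cdots
\end{equation*}
shows that vanishing of $K_m$ of the obstruction category for $m \geq m_0$ immediately implies that the assembly map~(\ref{eq-assembly}) is an isomorphism for all $m \geq m_0 + 1$ and surjective in degree $m_0$.

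To obtain an isomorphism in every integer degree, I would exploit that the hypothesis holds for \emph{every} additive $G$-category $\mathcal{A}$. There is a Pedersen--Weibel style delooping $\mathcal{A} \mapsto \mathcal{A}\Sigma$ on additive $G$-categories (for instance, $\mathbb{Z}$-bounded sequences in $\mathcal{A}$ modulo morphisms supported on a bounded region, with the right $G$-action inherited from $\mathcal{A}$) such that
\[
K_{m+1}(\mathcal{A}\Sigma) \cong K_m(\mathcal{A}).
\]
Because the new coordinate is independent of the control data defining the obstruction category, this construction commutes up to $K$-theory with the obstruction-category functor, so that $K_m(\mathcal{O}^G(E_\mathcal{F}G,\pt;\mathcal{A}\Sigma)) \cong K_{m-1}(\mathcal{O}^G(E_\mathcal{F}G,\pt;\mathcal{A}))$. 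Applying the hypothesis to $\mathcal{A}\Sigma^k$ and iterating extends the vanishing range for $\mathcal{A}$ itself down to every $m \in \mathbb{Z}$. Combined with the long exact sequence above, the assembly map~(\ref{eq-assembly}) is an isomorphism in every degree.

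The main obstacle is the compatibility claim in the last step: that the Pedersen--Weibel delooping on the coefficient category induces the expected shift on the $K$-theory of the obstruction category. Heuristically, the new $\mathbb{Z}$-coordinate decouples cleanly from both the equivariant continuous control condition $\mathcal{E}^X_{Gcc}$ and the bounded-control condition on $Y$ already present, so that $\mathcal{O}^G(E_\mathcal{F}G,\pt;\mathcal{A}\Sigma)$ becomes, up to cofinality, a controlled category in one extra coordinate to which the Bass--Heller--Swan theorem applies. Verifying this cleanly, however, requires careful bookkeeping of the control conditions and is the only genuinely nontrivial piece of the argument; everything else is formal manipulation of the long exact sequence.
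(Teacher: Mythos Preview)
Your proposal is correct and matches the paper's treatment: the paper does not give an independent proof but simply cites \cite[Proposition 3.8]{BLR08}, and what you have outlined is precisely that argument---the Karoubi-filtration fibration sequence identifying $K_*(\mathcal{O}^G(E_\mathcal{F}G,\pt;\mathcal{A}))$ with the homotopy fiber of the assembly map, followed by the coefficient-shift (Eilenberg swindle / Pedersen--Weibel) trick to propagate vanishing from degrees $\geq m_0$ to all degrees. Your caveat about verifying the compatibility of the delooping with the control conditions is well placed; in \cite{BLR08} this is handled by working throughout with categories controlled over an extra $\Z$- or $[0,1)$-factor, so the bookkeeping is absorbed into the general framework rather than checked ad hoc.
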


The reason why we study the category $\mathcal{O}^G(E_\mathcal{F}G,(Y,d_Y);\mathcal{A})$ not only for $Y := \pt$ is that we need room for certain constructions.
Moreover, we want to consider simultaneously metric spaces $(Y_n,d_n)$ with isometric $G$-action ($n \in \N$). In analogy to \cite[subsection 3.4]{BLR08} we define the additive subcategory
\[
\mathcal{O}^G\big(E_\mathcal{F}G,(Y_n,d_n)_{n \in \N};\mathcal{A}\big) \subseteq \prod_{n \in \N} \mathcal{O}^G\big(E_\mathcal{F}G,(Y_n,d_n);\mathcal{A}\big)
\]
by requiring additional conditions on the morphisms. A morphism $\phi = (\phi(n))_{n \in \N}$ is allowed if there are $R > 0$ and a finite subset $F \subseteq G$ (not depending on $n$) such that $\phi(n)_{(g,x,y,t),(g',x',y',t')} = 0$ whenever $g^{-1}g' \notin F$ or $d_n(y,y') > R$.

The inclusion
\[
\bigoplus_{n \in \N} \mathcal{O}^G\big(E_\mathcal{F}G,(Y_n,d_n);\mathcal{A}\big) \to \mathcal{O}^G\big(E_\mathcal{F}G,(Y_n,d_n)_{n \in \N};\mathcal{A}\big)
\]
is a Karoubi filtration and we denote the quotient by $\mathcal{O}^G(E_\mathcal{F}G,(Y_n,d_n)_{n \in \N};\mathcal{A})^{> \oplus}$.
Notice that a sequence of $G$-equivariant maps $(f_n \colon Y_n \to Y'_n)_{n \in \N}$ induces a functor $(f_n)_* \colon \mathcal{O}^G(E_\mathcal{F}G,(Y_n,d_n)_{n \in \N};\mathcal{A}) \to \mathcal{O}^G(E_\mathcal{F}G,(Y'_n,d'_n)_{n \in \N};\mathcal{A})$ if for every $r > 0$ there exists $R > 0$ such that $d'_n(f_n(y_1),f_n(y_2)) < R$ whenever $d_n(y_1,y_2) < r$.

\section{Outline of the proof of Theorem~\ref{thm-main}} \label{sec-out}

In this section we sketch the proof of Theorem~\ref{thm-main}.

Since the class of groups satisfying the $K$-theoretic Farrell-Jones conjecture is closed under directed colimits, it suffices to prove the bijectivity of the K-theoretic assembly map~(\ref{eq-assembly}) for every finitely generated subgroup $H$ of $G$ (with respect to the family $\mathcal{F}_H := \{ F \cap H \mid F \in \mathcal{F} \}$). Moreover, strong transfer reducibility is stable under taking subgroups (see Remark~\ref{rem-subgr}). This shows that it is enough to prove Theorem~\ref{thm-main} for finitely generated groups. Therefore we can and will assume that $G$ is finitely generated.

We fix a finite symmetric generating subset $S \subseteq G$ which contains the trivial element $e \in G$. We apply Proposition~\ref{prop-str} to $S^n := \{ s_1 \cdot s_2 \cdot \ldots \cdot s_n \, | \, s_i \in S \} \subseteq G$ and $k := n$ and obtain
\begin{itemize}
\item compact contractible controlled $N$-dominated metric spaces $X_n$,
\item strong homotopy $G$-actions $\Psi_n$ on $X_n$,
\item positive real numbers $\Lambda_n$,
\item simplicial complexes $\Sigma_n$ of dimension $\leq N$ with simplicial cell preserving $G$-actions and
\item $G$-equivariant maps $f_n \colon G \times X_n \to \Sigma_n$
\end{itemize}
such that
\begin{itemize}
\item the isotropy groups of $\Sigma_n$ belong to $\mathcal{F}$ and
\item $n \cdot d^1(f_n(g,x),f_n(h,y)) \leq d_{\Psi_n,S^n,n,\Lambda_n}((g,x),(h,y))$ for all $(g,x),(h,y) \in G \times X_n$.
\end{itemize}
We abbreviate $d_n := d_{\Psi_n,S^n,n,\Lambda_n}$.

By Proposition~\ref{prop-assembly} it suffices to show
\[
 K_m\big(\mathcal{O}^G(E_\mathcal{F}G,\pt;\mathcal{A})\big) = 0
\]
for all $m \geq 1$.
Theorem~\ref{thm-main} is a consequence of the following commuting diagram:
\begin{equation*} \label{diagram}
\xymatrix{
K_m\big(\mathcal{O}^G(E_\mathcal{F}G,\pt;\mathcal{A})\big) \ar@{^{(}->}[d]^{\diag_*} \ar[rd]^{\trans_*} & \\
K_m\big(\mathcal{O}^G(E_\mathcal{F}G,(\pt)_{n \in \N};\mathcal{A})^{> \oplus}\big) \ar[d]^{=} & \ar[l]_{\hspace{-5mm} \pr_*} K_m\big(\mathcal{O}^G(E_\mathcal{F}G,(G \times X_n,d_n)_{n \in \N};\mathcal{A})^{> \oplus}\big) \ar[d]^{(f_n)_*} \\
K_m\big(\mathcal{O}^G(E_\mathcal{F}G,(\pt)_{n \in \N};\mathcal{A})^{> \oplus}\big) & \ar[l]_{\hspace{-5mm} \pr_*} K_m\big(\mathcal{O}^G(E_\mathcal{F}G,(\Sigma_n,n \cdot d^1)_{n \in \N};\mathcal{A})^{> \oplus}\big) = 0
}
\end{equation*}
The map $\diag_* \colon K_m(\mathcal{O}^G(E_\mathcal{F}G,\pt;\mathcal{A})) \to K_m(\mathcal{O}^G(E_\mathcal{F}G,(\pt)_{n \in \N};\mathcal{A})^{> \oplus})$ is induced by the diagonal map. The injectivity of this map can easily be shown by a diagram chase in the diagram
\begin{footnotesize}
\begin{equation*}
\xymatrix{
& K_m\big(\mathcal{O}^G(E_\mathcal{F}G,\pt;\mathcal{A})\big) \ar[d]^{\diag_*} \ar[rd]^{\diag_*} & \\
K_m\big(\displaystyle\bigoplus_{n \in \N} \mathcal{O}^G(E_\mathcal{F}G,\pt;\mathcal{A})\big) \ar[r] \ar[d]^{\bigoplus_{n \in \N} \pr_n}_\cong & K_m\big(\mathcal{O}^G(E_\mathcal{F}G,(\pt)_{n \in \N};\mathcal{A})\big) \ar[d]^{\prod_{n \in \N} \pr_n} \ar[r] & K_m\big(\mathcal{O}^G(E_\mathcal{F}G,(\pt)_{n \in \N};\mathcal{A})^{> \oplus}\big) \\
\displaystyle\bigoplus_{n \in \N} K_m\big(\mathcal{O}^G(E_\mathcal{F}G,\pt;\mathcal{A})\big) \ar[r] & \displaystyle\prod_{n \in \N} K_m\big(\mathcal{O}^G(E_\mathcal{F}G,\pt;\mathcal{A})\big) &
}
\end{equation*}
\end{footnotesize}
where the middle row comes from the Karoubi filtration. (Notice that the composition $\prod_{n \in \N} \pr_n \circ \diag_*$ in the middle column is the diagonal map.)
The transfer map
\[
\trans_* \colon K_m\big(\mathcal{O}^G(E_\mathcal{F}G,\pt;\mathcal{A})\big) \to K_m\big(\mathcal{O}^G(E_\mathcal{F}G,(G \times X_n,d_n)_{n \in \N};\mathcal{A})^{> \oplus}\big)
\]
will be constructed in section~\ref{sec-trans}.
The maps $\pr_*$ are induced by the projections $\pr \colon G \times X_n \to \pt$ resp. $\pr \colon \Sigma_n \to \pt$.
The equation
\[
K_m\big(\mathcal{O}^G(E_\mathcal{F}G,(\Sigma_n,n \cdot d^1)_{n \in \N};\mathcal{A})^{> \oplus}\big) = 0
\]
is proved in \cite[Theorem 7.2]{BLR08}.

\section{Preparations for the transfer} \label{sec-prep}

We will define the transfer map
\[
\trans_* \colon K_m\big(\mathcal{O}^G(E_\mathcal{F}G,\pt;\mathcal{A})\big) \to K_m\big(\mathcal{O}^G(E_\mathcal{F}G,(G \times X_n,d_n)_{n \in \N};\mathcal{A})^{> \oplus}\big).
\]
as the map induced by a functor
\[
\trans \colon \mathcal{O}^G(E_\mathcal{F}G,\pt;\mathcal{A}) \to \widetilde{\ch}_\hfd \mathcal{O}^G(E_\mathcal{F}G,(G \times X_n,d_n)_{n \in \N};\mathcal{A})^{> \oplus}.
\]

In this section we give a quite short review of the construction of the category $\widetilde{\ch}_\hfd \mathcal{O}^G(E_\mathcal{F}G,(G \times X_n,d_n)_{n \in \N};\mathcal{A})^{> \oplus}$. For more details we refer to \cite[subsection 6.2]{BLR08}.

For a metric space $(Y,d_Y)$ with an isometric $G$-action we define the category $\overline{\mathcal{O}}^G(E_\mathcal{F}G,(Y,d_Y);\mathcal{A}^\kappa)$ in the same way as in section~\ref{sec-obscat} but we replace $\mathcal{A}$ by $\mathcal{A}^\kappa$ for a fixed (suitably chosen) infinite cardinal $\kappa$ and drop the assumption that the support of objects is locally finite. Moreover, instead of requiring for a morphism $\phi = (\phi_{z,z'})_{z,z' \in G \times E_\mathcal{F}G \times Y \times [1,\infty)}$ that the sets $\{ z \, | \, \phi_{z,z'} \neq 0\}$ and $\{ z \, | \, \phi_{z',z} \neq 0\}$ are finite, we define a morphism
\[
 \phi = (\phi_{z,z'}) \colon B = (B_{z'})_{z' \in G \times E_\mathcal{F}G \times Y \times [1,\infty)} \to A = (A_z)_{z \in G \times E_\mathcal{F}G \times Y \times [1,\infty)}
\]
to be a morphism $\bigoplus_{z' \in G \times E_\mathcal{F}G \times Y \times [1,\infty)} B_{z'} \to \bigoplus_{z \in G \times E_\mathcal{F}G \times Y \times [1,\infty)} A_z$ in the category ${\mathcal A}^\kappa$.
For a sequence $(Y_n,d_n)_{n \in \N}$ of metric spaces with isometric $G$-action we define
\[
\overline{\mathcal{O}}^G(E_\mathcal{F}G,(Y_n,d_n)_{n \in \N};\mathcal{A}^\kappa) \subset \prod_{n \in \N} \overline{\mathcal{O}}^G(E_\mathcal{F}G,(Y_n,d_n);\mathcal{A}^\kappa)
\]
by requiring additional conditions on the morphisms precisely as in section~\ref{sec-obscat}. The inclusion
\[
\bigoplus_{n \in \N} \overline{\mathcal{O}}^G(E_\mathcal{F}G,(Y_n,d_n);\mathcal{A}^\kappa) \to \overline{\mathcal{O}}^G(E_\mathcal{F}G,(Y_n,d_n)_{n \in \N};\mathcal{A}^\kappa)
\]
is a Karoubi filtration and we denote the quotient by $\overline{\mathcal{O}}^G(E_\mathcal{F}G,(Y_n,d_n)_{n \in \N};\mathcal{A}^\kappa)^{> \oplus}$.
For the rest of this section we abbreviate
\begin{eqnarray*}
 \mathcal{O} & := & \mathcal{O}^G(E_\mathcal{F}G,(Y_n,d_n)_{n \in \N};\mathcal{A})^{> \oplus}, \\
 \overline{\mathcal{O}} & := & \overline{\mathcal{O}}^G(E_\mathcal{F}G,(Y_n,d_n)_{n \in \N};\mathcal{A}^\kappa)^{> \oplus}.
\end{eqnarray*}
One should think of the inclusion $\mathcal{O} \subset \overline{\mathcal{O}}$ as an inclusion of a full additive subcategory on objects satisfying finiteness conditions into a large category which gives room for constructions.

Let $\mathcal{C}$ be an additive category (e.g. $\mathcal{O}$ or $\overline{\mathcal{O}}$). We write $\Idem(\mathcal{C})$ for its idempotent completion. We define $\ch_\f(\mathcal{C})$ to be the category of chain complexes in $\mathcal{C}$ that are bounded above and below and $\ch^{\geq}(\mathcal{C})$ to be the category of chain complexes that are bounded below.
We write $\ch_\hf(\Idem(\mathcal{O}) \subset \Idem(\overline{\mathcal{O}}))$ for the full subcategory of $\ch^{\geq}(\Idem(\overline{\mathcal{O}}))$ consisting of chain complexes which are chain homotopy equivalent to a chain complex in $\ch_\f(\Idem(\mathcal{O}))$. We write $\ch_\hfd(\mathcal{O})$ for the full subcategory of $\ch^{\geq} \Idem(\overline{\mathcal{O}})$ consisting of objects $C$ which are homotopy retracts of objects in $\ch_\f(\mathcal{O})$, i.e. there exists a diagram $C \xrightarrow{i} D \xrightarrow{r} C$ with $D \in \ch_\f(\mathcal{O})$ such that the composition $r \circ i$ is chain homotopic to the identity on $C$.

The category $\ch_\hfd(\mathcal{O})$ is a Waldhausen category: The notion of chain homotopy leads to a notion of weak equivalence, and we define cofibrations to be those chain maps which are degree-wise the inclusion of a direct summand.
The following lemma is proven in \cite[Lemma 6.5]{BLR08}.
\begin{lemma}
The inclusion $\mathcal{O} \subset \ch_\hfd(\mathcal{O})$ induces an equivalence on $K_m$ for all $m \geq 1$.
\end{lemma}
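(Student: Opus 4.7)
The plan is to factor the inclusion $\mathcal{O} \hookrightarrow \ch_\hfd(\mathcal{O})$ through two intermediate categories:
\[
\mathcal{O} \hookrightarrow \Idem(\mathcal{O}) \hookrightarrow \ch_\f(\Idem(\mathcal{O})) \hookrightarrow \ch_\hfd(\mathcal{O}),
\]
where $\ch_\f(\Idem(\mathcal{O}))$ is viewed as a Waldhausen subcategory of $\ch_\hfd(\mathcal{O})$ with the inherited structure (chain homotopy equivalences as weak equivalences, degreewise split monomorphisms as cofibrations). I would then show that each arrow induces an isomorphism on $K_m$ for $m \geq 1$.

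For the first arrow I would invoke the Pedersen--Grayson cofinality theorem: the inclusion of an additive category into its idempotent completion is an isomorphism on $K_m$ for all $m \geq 1$, since the quotient $K_0(\Idem(\mathcal{O}))/K_0(\mathcal{O})$ is the only obstruction and it sits in degree zero. For the second arrow I would invoke Gillet--Waldhausen, which identifies the $K$-theory of any additive category (viewed as split-exact) with that of its Waldhausen category of bounded chain complexes equipped with chain homotopy equivalences.

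The third arrow $\ch_\f(\Idem(\mathcal{O})) \hookrightarrow \ch_\hfd(\mathcal{O})$ is the core of the argument, and I would attack it with Waldhausen's approximation theorem. The nontrivial approximation axiom demands that every $C \in \ch_\hfd(\mathcal{O})$ be chain homotopy equivalent to some $C' \in \ch_\f(\Idem(\mathcal{O}))$. Starting from domination data $C \xrightarrow{i} D \xrightarrow{r} C$ with $D \in \ch_\f(\mathcal{O})$ and $r \circ i \simeq \id_C$, the composition $e := i \circ r$ is a homotopy idempotent on $D$. Using the infinite coproducts available in $\overline{\mathcal{O}}$ (this is why the paper introduced the cardinal $\kappa$), one forms the mapping telescope of $D \xrightarrow{e} D \xrightarrow{e} \cdots$ inside $\ch^{\geq}(\Idem(\overline{\mathcal{O}}))$, and a standard Eilenberg-swindle manipulation rigidifies $e$ into a strict idempotent. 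Splitting this idempotent yields a summand $C'$ which is chain homotopy equivalent to $C$ and lies in $\ch_\f(\Idem(\mathcal{O}))$. The remaining approximation axiom, factoring a map from a bounded complex as a cofibration followed by a weak equivalence with bounded target, is dealt with by the standard mapping cylinder construction, which preserves boundedness.

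The main obstacle is the rigidification step: one must produce an honest splitting of $e$ whose image not only is bounded up to homotopy but also lies in $\Idem(\mathcal{O})$ rather than merely in $\Idem(\overline{\mathcal{O}})$. This is precisely where the interplay between the enlarged category $\overline{\mathcal{O}}$ and the Karoubi filtration becomes essential: $\overline{\mathcal{O}}$ provides the infinite direct sums needed for the telescope, while the support control built into the definition of the quotient $(-)^{> \oplus}$ forces the resulting summand back into $\Idem(\mathcal{O})$. Once this splitting is established, both Waldhausen approximation and the preceding cofinality and Gillet--Waldhausen steps are essentially formal, and the three $K_m$-isomorphisms compose to give the lemma.
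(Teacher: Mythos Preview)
The paper does not prove this lemma at all; it simply cites \cite[Lemma~6.5]{BLR08}. Your outline is essentially the argument given there: cofinality for $\mathcal{O} \hookrightarrow \Idem(\mathcal{O})$, Gillet--Waldhausen for $\Idem(\mathcal{O}) \hookrightarrow \ch_\f(\Idem(\mathcal{O}))$, and Waldhausen approximation for $\ch_\f(\Idem(\mathcal{O})) \hookrightarrow \ch_\hfd(\mathcal{O})$, with the key input being that a homotopy retract of a bounded complex over $\mathcal{O}$ is chain homotopy equivalent to a bounded complex over $\Idem(\mathcal{O})$.

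One point in your write-up is misleading and should be corrected. You attribute the fact that the rigidified summand lands in $\Idem(\mathcal{O})$ to ``the support control built into the definition of the quotient $(-)^{>\oplus}$''. That is not what is happening. The quotient structure plays no role in this step. What happens is purely algebraic: the homotopy idempotent $e = i \circ r$ on $D \in \ch_\f(\mathcal{O})$ is rigidified (via a telescope or a L\"uck--Ranicki style argument) to a strict chain idempotent on a bounded complex built from finitely many copies of $D$, still in $\ch_\f(\mathcal{O})$; splitting that strict idempotent is exactly what $\Idem(\mathcal{O})$ is designed to allow. The ambient category $\overline{\mathcal{O}}$ is only there so that $\ch_\hfd(\mathcal{O})$ has enough objects to contain the intermediate constructions (e.g.\ the a priori unbounded telescope), not to enforce any finiteness. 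Once you remove that sentence and replace it with the correct justification, the sketch is sound.
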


We recall from \cite[subsection 8.2]{BR05} that for a given Waldhausen category $\mathcal{W}$ there exists a Waldhausen category $\widetilde{\mathcal W}$ whose objects are sequences
\[
 C_0 \xrightarrow{c_0} C_1 \xrightarrow{c_1} C_2 \xrightarrow{c_2} \cdots
\]
where the $c_\alpha$ are morphisms in ${\mathcal W}$ that are both cofibrations and weak equivalences. A morphism $f$ in $\widetilde{\mathcal W}$ is represented by a sequence of morphisms $(f_\alpha, f_{\alpha+1}, f_{\alpha+2}, \cdots)$ which makes the diagram
\begin{equation*}
\xymatrix{
C_\alpha \ar[d]^{f_\alpha} \ar[r]^{c_\alpha} & C_{\alpha+1} \ar[d]^{f_{\alpha+1}} \ar[r]^{c_{\alpha+1}} & C_{\alpha+2} \ar[d]^{f_{\alpha+2}} \ar[r]^{c_{\alpha+2}} & \cdots \\
D_{\alpha+k} \ar[r]^{d_{\alpha+k}} & D_{\alpha+k+1} \ar[r]^{d_{\alpha+k+1}} & D_{\alpha+k+2} \ar[r]^{d_{\alpha+k+2}} & \cdots
}
\end{equation*}
($\alpha,k \in \N_0$) commutative. If we enlarge $\alpha$ or $k$ the resulting diagrams represent the same morphism, i.e. we identify $(f_\alpha, f_{\alpha+1}, f_{\alpha+2}, \cdots)$ with $(f_{\alpha+1}, f_{\alpha+2}, f_{\alpha+3}, \cdots)$ but also with $(d_{\alpha+k} \circ f_\alpha, d_{\alpha+k+1} \circ f_{\alpha+1}, d_{\alpha+k+2} \circ f_{\alpha+2}, \cdots)$. Sending an object to the constant sequence defines an inclusion ${\mathcal W} \to {\widetilde {\mathcal W}}$. According to \cite[Proposition 8.2]{BR05} the inclusion induces an isomorphism on $K_m$ for $m \geq 0$ under some mild conditions for ${\mathcal W}$. These conditions will be satisfied in all our examples.

\section{The transfer} \label{sec-trans}

In this section we define a functor
\[
\trans \colon \mathcal{O}^G(E_\mathcal{F}G,\pt;\mathcal{A}) \to \widetilde{\ch}_\hfd (\mathcal{O}^G(E_\mathcal{F}G,(G \times X_n,d_n)_{n \in \N};\mathcal{A})^{> \oplus})
\]
which induces the desired transfer map
\[
\trans_* \colon K_m\big(\mathcal{O}^G(E_\mathcal{F}G,\pt;\mathcal{A})\big) \to K_m\big(\mathcal{O}^G(E_\mathcal{F}G,(G \times X_n,d_n)_{n \in \N};\mathcal{A})^{> \oplus}\big).
\]

The strong homotopy $G$-actions $\Psi_n$ on $X_n$ induce $G$-spaces $M_n$ (see Remark~\ref{rem_cha}). Moreover, we obtain filtrations
\[
 M_n^0 \subseteq M_n^1 \subseteq M_n^2 \subseteq \cdots \subseteq \bigcup_{\alpha \in \N_0} M_n^\alpha = M_n
\]
with
\[
 M_n^\alpha := \big\{ \Psi_n(?,t_\alpha,g_{\alpha-1},\ldots,g_0,x) \, \big| \, t_i \in [0,1], g_i \in S^\alpha, x \in X_n \big\}.
\]

We define maps $i_n^\alpha \colon M_n^\alpha \to X_n$ and $p_n \colon X_n \to M_n^0 \subseteq M_n^\alpha$ by $i_n^\alpha(f) := f(e)$ and $p_n(x) := \Psi_n(?,x)$.
$H_n^\alpha \colon M_n^\alpha \times [0,1] \to M_n^\alpha, (f,t) \mapsto f(?,t,e)$ is a homotopy between $H_n^\alpha(?,0) = p_n \circ i_n^\alpha$ and $H_n^\alpha(?,1) = \id$.

We denote by $C_*(n,\alpha) \subseteq C_*^\sing(G \times X_n)$ resp. $D_*(n,\alpha) \subseteq C_*^\sing(G \times M_n^\alpha)$ the chain subcomplex generated by all singular simplices $\sigma \colon \Delta \to G \times X_n$ resp. $G \times M_n^\alpha$ for which the diameter of $\sigma(\Delta)$ resp. $(\id \times i_n^\alpha) \circ \sigma(\Delta)$ is less or equal to $2\alpha$. $C_*(n,\alpha)$ and $D_*(n,\alpha)$ are both chain complexes over $G \times X_n$ via the barycenter map resp. the composition of the barycenter map with $\id \times i_n^\alpha$.

Let $A$ be an object in $\mathcal{O}^G(E_\mathcal{F}G,\pt;\mathcal{A})$. We define objects $(A \otimes C_*(n,\alpha))_{n \in \N}$ and $(A \otimes D_*(n,\alpha))_{n \in \N}$ in $\ch^{\geq} \overline{\mathcal{O}}^G(E_\mathcal{F}G,(G \times X_n,d_n)_{n \in \N};\mathcal{A^\kappa})$ by
\begin{eqnarray*}
\big(A \otimes C_k(n,\alpha)\big)_{(g,e,(h,x),t)} & := & \left\{ \begin{array}{ll} A_{(g,e,t)} \otimes C_k(n,\alpha)_{(g,x)} & \mbox{if $g=h$} \\ 0 & \mbox{otherwise} \end{array} \right., \\
\big(A \otimes D_k(n,\alpha)\big)_{(g,e,(h,x),t)} & := & \left\{ \begin{array}{ll} A_{(g,e,t)} \otimes D_k(n,\alpha)_{(g,x)} & \mbox{if $g=h$} \\ 0 & \mbox{otherwise} \end{array} \right..
\end{eqnarray*}
The differentials are given by $\id \otimes \partial$.

Furthermore, \cite[Lemma 8.4]{BL09} implies that $(A \otimes C_*(n,\alpha))_{n \in \N}$ is an object in $\ch_\hfd(\mathcal{O}^G(E_\mathcal{F}G,(G \times X_n,d_n)_{n \in \N};\mathcal{A})^{> \oplus})$. The usual construction of the chain homotopy associated to the homotopy $\id \times H_n^\alpha \colon G \times M_n^\alpha \to G \times M_n^\alpha$ yields a chain homotopy between $(\id \times (p_n \circ i_n^\alpha))_* \colon C_*(n,\alpha) \to C_*(n,\alpha)$ and the identity. Let $\sigma \colon \Delta \to G \times M_n^\alpha$ be a singular simplex for which the diameter of $i_n^\alpha \circ \pr_{M_n^\alpha} \circ \sigma(\Delta)$ is less or equal to $2\alpha$. Since $i_n^\alpha \circ H_n^\alpha(?,t) = i_n^\alpha$ for all $t$, the diameter of images of simplices in $\Delta \times [0,1]$ under $i_n^\alpha \circ \pr_{M_n^\alpha} \circ (\id \times H_n^\alpha) \circ (\sigma \times \id)$ is again bounded by $2\alpha$. This shows that $(A \otimes D_*(n,\alpha))_{n \in \N}$ is a homotopy retract of $(A \otimes C_*(n,\alpha))_{n \in \N}$ and that $(A \otimes D_*(n,\alpha))_{n \in \N}$ is an object in $\ch_\hfd(\mathcal{O}^G(E_\mathcal{F}G,(G \times X_n,d_n)_{n \in \N};\mathcal{A})^{> \oplus})$, too.

We define $\trans(A)$ as the object
\[
 \big(A \otimes D_*(n,0)\big)_{n \in \N} \xrightarrow{\id \otimes \inc} \big(A \otimes D_*(n,1)\big)_{n \in \N} \xrightarrow{\id \otimes \inc} \big(A \otimes D_*(n,2)\big)_{n \in \N} \xrightarrow{\id \otimes \inc} \ldots
\]
in $\widetilde{\ch}_\hfd \mathcal{O}^G(E_\mathcal{F}G,(G \times X_n,d_n)_{n \in \N};\mathcal{A})^{> \oplus}$.

Let $\phi \colon A \to B$ be a morphism in $\mathcal{O}^G(E_\mathcal{F}G,\pt;\mathcal{A})$. We choose $\alpha_0 \in \N$ such that $\phi_{(g,e,t),(g',e',t')} = 0$ whenever $g^{-1}g' \notin S^{\alpha_0}$.
For $\alpha \geq \alpha_0$ we define
\[
 \big(\phi \otimes m(n,\alpha)\big)_{n \in \N} \colon \big(A \otimes D_*(n,\alpha)\big)_{n \in \N} \to \big(B \otimes D_*(n,\alpha+1)\big)_{n \in \N}
\]
whose components are given by
\[
 \big(\phi \otimes m(n,\alpha)\big)_{(g,e,(g,x),t),(g',e',(g',x'),t')} = \phi_{(g,e,t),(g',e',t')} \otimes m_{g^{-1}g'}(n,\alpha)_{(g,x),(g',x')}.
\]
Here, $m_h(n,\alpha) \colon D_*(n,\alpha) \to D_*(n,\alpha+1)$ ($h \in S^{\alpha_0} \subseteq S^\alpha$) is the map induced by
\[
 G \times M_n^\alpha \to G \times M_n^{\alpha+1}, (g,f) \mapsto (gh^{-1},c_h(f))
\]
where $c_h \colon M_n^\alpha \to M_n^{\alpha+1}, f \mapsto f(?,1,h)$ is the restriction of the $G$-action on $M_n$.

Then $(\phi \otimes m(n,\alpha))_{n \in \N}$ is a morphism in $\ch_\hfd \mathcal{O}^G(E_\mathcal{F}G,(G \times X_n,d_n)_{n \in \N};\mathcal{A})^{> \oplus}$.
The crucial point is that we have
\[
 \big(\phi \otimes m(n,\alpha)\big)_{(g,e,(g,x),t),(g',e',(g',x'),t')} = 0
\]
for $n \geq \alpha \geq \alpha_0$ whenever $d_n((g,x),(g',x')) > 2$.
We will prove this fact. Suppose that $(\phi \otimes m(n,\alpha))_{(g,e,(g,x),t),(g',e',(g',x'),t')} \neq 0$ with $n \geq \alpha \geq \alpha_0$. We want to show $d_n((g,x),(g',x')) \leq 2$. We have $\phi_{(g,e,t),(g',e',t')} \neq 0$ and $m_{g^{-1}g'}(n,\alpha)_{(g,x),(g',x')} \neq 0$.
The first inequality shows $g^{-1}g' \in S^{\alpha_0} \subseteq S^\alpha$. The second inequality implies the existence of an element $f \in M_n^\alpha$ such that $f(e)=x'$ and $c_{g^{-1}g'}(f)(e)=x$. We write $f = \Psi_n(?,t_\alpha,g_{\alpha-1},\ldots,g_0,y)$ with $t_i \in [0,1]$, $g_i \in S^\alpha$, $y \in X_n$. We obtain
\[
 (g',x') = \big(g',f(e)\big), (g,x) = \big(g,c_{g^{-1}g'}(f)(e)\big) \in S^1_{\Psi_n,S^\alpha,\alpha}(g'k,y)
\]
with $k := g_{\alpha-1} \cdot \ldots \cdot g_0$. This implies
\[
 d_n\big((g,x),(g',x')\big) \leq d_n\big((g,x),(g'k,y)\big) + d_n\big((g',x'),(g'k,y)\big) \leq 2.
\]

Finally, we obtain a functor
\[
\trans \colon \mathcal{O}^G(E_\mathcal{F}G,\pt;\mathcal{A}) \to \widetilde{\ch}_\hfd \mathcal{O}^G(E_\mathcal{F}G,(G \times X_n,d_n)_{n \in \N};\mathcal{A})^{> \oplus}
\]
which sends a morphism $\phi \colon A \to B$ to the morphism represented by
\begin{equation*}
\xymatrix{
(A \otimes D_*(n,\alpha_0))_{n \in \N} \ar[d]^{(\phi \otimes m(n,\alpha_0))_{n \in \N}} \ar[r]^{\id \otimes \inc} & (A \otimes D_*(n,\alpha_0+1))_{n \in \N} \ar[d]^{(\phi \otimes m(n,\alpha_0+1))_{n \in \N}} \ar[r]^{\hspace{15mm} \id \otimes \inc} & \cdots \\
(B \otimes D_*(n,\alpha_0+1))_{n \in \N} \ar[r]^{\id \otimes \inc} & (B \otimes D_*(n,\alpha_0+2))_{n \in \N} \ar[r]^{\hspace{15mm} \id \otimes \inc} & \cdots
}
\end{equation*}

See \cite[Lemma 6.16]{BLR08} for the proof that
\[
\pr_* \circ \trans_* = \diag_* \colon K_m\big(\mathcal{O}^G(E_\mathcal{F}G,\pt;\mathcal{A})\big) \to K_m\big(\mathcal{O}^G(E_\mathcal{F}G,(\pt)_{n \in \N};\mathcal{A})^{> \oplus}\big).
\]

\end{document}